\documentclass[12pt,A4]{amsart}
\usepackage{amsfonts,amsthm,amsmath}
\usepackage{amssymb}
\usepackage{rotating}
\usepackage{tikz}
\usepackage{verbatim}
\usepackage{amscd}
\usepackage[latin2]{inputenc}
\usepackage{t1enc}
\usepackage[mathscr]{eucal}
\usepackage{indentfirst}
\usepackage{graphicx}
\usepackage{graphics}
\usepackage{pict2e}
\usepackage{mathrsfs}
\usepackage{enumerate}
\usepackage[pagebackref]{hyperref}
\hypersetup{backref, pagebackref, colorlinks=true}
\usepackage{cite}
\usepackage{color}
\usepackage{epic}
\usepackage{hyperref} 
\usepackage{tkz-graph}

\numberwithin{equation}{section}
\topmargin 0.8in
\textheight=8.2in
\textwidth=6.4in
\voffset=-.68in
\hoffset=-.68in

\def\blue{\textcolor{blue}}
\def\red{\textcolor{red}}
\def\magenta{\textcolor{magenta}}

\theoremstyle{plain}
\newtheorem{theorem}{Theorem}[section]
\newtheorem{lemma}[theorem]{Lemma}
\newtheorem{corollary}[theorem]{Corollary}

\newtheorem{proposition}[theorem]{Proposition}

\theoremstyle{definition}

\newtheorem{example}[theorem]{Example}
\newtheorem{conjecture}[theorem]{Conjecture}
\newtheorem{remark}[theorem]{Remark}
\newtheorem{?}[theorem]{Problem}

\newcommand{\N}{\mathbb{N}}



\newcommand\des{\mathop{\rm des}}


\def\DES{\mathrm{DES}}
\def\st{\mathrm{st}}

\def\max{\mathrm{max}}
\def\min{\mathrm{min}}

\def\des{\mathrm{des}}

\def\S{\mathfrak{S}}

\def\tg{\mathrm{tg}}

\def\ST{\mathrm{ST}}
\def\VID{\mathrm{VID}}
\def\LMA{\mathrm{LMA}}
\def\LMI{\mathrm{LMI}}
\def\RMA{\mathrm{RMA}}
\def\RMI{\mathrm{RMI}}

\def\AVA{\mathrm{AVA}}

\def\DIST{\mathrm{DIST}}
\def\ZERO{\mathrm{ZERO}}
\def\EMA{\mathrm{EMA}}
\def\ROW{\mathrm{ROW}}
\def\last{\mathrm{last}}
\def\cri{\mathrm{cri}}

\def\segm{\mathrm{segment}}
\def\lma{\mathrm{lma}}
\def\rma{\mathrm{rma}}
\def\iasc{\mathrm{iasc}}
\def\IASC{\mathrm{IASC}}

\def\asc{\mathrm{asc}}
\def\ASC{\mathrm{ASC}}
\def\BOT{\mathrm{BOT}}

\def\dist{\operatorname{dist}}
\def\EXPO{\operatorname{EXPO}}
\def\ides{\operatorname{ides}}
\def\max{\operatorname{max}}
\def\mod{\operatorname{mod}}

\def\I{\operatorname{{\bf I}}}

\def\s{\operatorname{{\bf s}}}
\def\st{\operatorname{st}}
\def\segm{\operatorname{segm}}
\def\Alt{\operatorname{Alt}}
\def\IDB{\operatorname{IDB}}

\def\boxit#1{\leavevmode\hbox{\vrule\vtop{\vbox{\kern.33333pt\hrule\kern1pt\hbox{\kern1pt\vbox{#1}\kern1pt}}\kern1pt\hrule}\vrule}}

\usepackage{collectbox}
\makeatletter

\makeatother

\begin{document}

\title[Restricted inversion sequences]{Refined restricted inversion sequences}

\author[Z. Lin]{Zhicong Lin}
\address[Zhicong Lin]{Research Center for Mathematics and Interdisciplinary Sciences, Shandong University, Qingdao 266237, P.R. China}
\email{linz@sdu.edu.cn}

\author[D. Kim]{Dongsu Kim}
\address[Dongsu Kim]{Department of Mathematical Sciences, Korea Advanced Institute of Science and Technology, Daejeon, 305-701, Republic of Korea} 
\email{dongsu.kim@kaist.ac.kr}

\date{\today}


\begin{abstract} Recently, the study of patterns in inversion sequences was initiated by Corteel-Martinez-Savage-Weselcouch and Mansour-Shattuck independently.  Motivated by their  works and a double Eulerian equidistribution due to Foata (1977), we investigate several classical statistics on  restricted inversion sequences that are either known or conjectured to be enumerated by  {\em Catalan}, {\em Large Schr\"oder},  {\em Baxter} and {\em Euler} numbers.  One of the two highlights of our results is a fascinating bijection between $000$-avoiding inversion sequences and Simsun permutations, which together with Foata's V- and S-codes, provide a proof of a restriced double Eulerian equdistribution.  The other one is a refinement of a conjecture due to  Martinez and Savage that the cardinality of $\I_n(\geq,\geq,>)$ is the $n$-th Baxter number, which is proved via the so-called {\em obstinate kernel method} developed by Bousquet-M\'elou.
\end{abstract}

\keywords{Inversion sequences, ascents, distinct entries, last entry, Schr\"oder numbers, Baxter numbers}

\maketitle


\section{Introduction}
For each $n\geq1$, the set of {\em inversion sequences} of length $n$, denoted $\I_n$, is defined by
$\I_n=\{(e_1,e_2,\ldots,e_n): 0\leq e_i<i\}$. It serves as various kind of codings for $\S_n$, the set of
permutations of $[n]:=\{1,2,\ldots,n\}$. By a {\em coding} of $\S_n$, we mean a bijection from $\S_n$ to
$\I_n$. For example, the map $\Theta: \S_n\rightarrow\I_n$ defined for
$\pi=\pi_1\pi_2\ldots\pi_n\in\S_n$ as
$$
\Theta(\pi)=(e_1,e_2,\ldots,e_n),\quad\text{where $e_i:=|\{j<i: \pi_j>\pi_i\}$}|,
$$
is a natural coding of $\S_n$. Clearly, the sum of the entries of $\Theta(\pi)$ equals the number of
{\em inversions} of $\pi$, i.e., the number of pairs $i<j$ such that $\pi_i>\pi_j$. This is the reason why
$\I_n$ is named inversion sequences here.
  
Pattern avoidance in permutations has already been extensively studied in the literature (see the book
by Kitave~\cite{ki}), while the systematic study of patterns in inversion sequences was initiated only
recently in~\cite{cor} and~\cite{mash}. Since both permutations and inversion sequences will be regarded
as words over $\N=\{0,1,\ldots\}$, their patterns can be defined in a unified way as follows. 

For two words $W=w_1w_2\cdots w_n$ and $P=p_1p_2\cdots p_k$ ($k\leq n$) on $\N$, we say that {\em$W$ contains the pattern $P$} if there  exist  some indices $i_1<i_2<\cdots<i_k$ such that the subword $w_{i_1}w_{i_2}\cdots w_{i_k}$ of $W$  is order isomorphic to $P$. Otherwise, $W$ is said to {\em avoid the pattern $P$}. For example, the word $W=32421$ contains the pattern $231$, because the subword $w_2w_3w_5=241$ of $W$ has the same relative order as $231$. However, $W$ is $101$-avoiding. For a set of words $\mathcal{W}$, the set of words in $\mathcal{W}$ avoiding patterns $P_1,\ldots,P_r$ is denoted by $\mathcal{W}(P_1,\ldots,P_r)$. 
One well-known  enumeration result in this area, attributed to MacMahon and Knuth (cf.~\cite{ki}), is that $|\S_n(123)|=C_n=|\S_n(132)|$, where $C_n:=\frac{1}{n+1}{2n\choose n}$ is the {\em $n$-th Catalan number}.

In~\cite{cor,mash}, inversion sequences avoiding patterns of length $3$ are exploited, where a number of familiar combinatorial sequences, such as {\em large Schr\"oder numbers} (denoted $S_n$) and {\em Euler numbers} (denoted by $E_n$), arise. Martinez and Savage~\cite{ms} further considered a generalization of
pattern avoidance to a fixed triple of binary relations $(\rho_1,\rho_2,\rho_3)$. For each triple of relations
$(\rho_1,\rho_2,\rho_3)\in\{<,\,>,\,\leq,\,\geq,\,=,\,\neq,\,-\}^3$, they studied the set $\I_n(\rho_1,\rho_2,\rho_3)$ consisting of those $e\in\I_n$  with no $i<j<k$ such that $e_i\,\rho_1\,e_j$, $e_j\,\rho_2\,e_k$ and
$e_i\,\rho_3\,e_k$. Here the relation $''-''$ on a set $S$ is all of $S\times S$, i.e.,
$x\,''\!\!-''y$ for all $x,y\in S$. For example, $\I_n(<,>,<)=\I_n(021)$ and $\I_n(\geq,-,\geq)=\I_n(000,101,110,100,201,210)$. In Fig.~\ref{patt}, we summarize some of their enumeration results and conjectures, as well as corresponding classical facts in permutation patterns.
\begin{figure}
\setlength {\unitlength} {1mm}
\begin {picture} (200,50) \setlength {\unitlength} {1mm}
\thinlines
\put(3,0){\line(1,0){160}}\put(3,50){\line(1,0){160}}\put(13,0){\line(0,1){50}}

\put(5,44){$C_n$}
\put(18,44){$\S_n(132),\S_n(321)$: classical result~\cite{ki}; $\I_n(\geq,-,\geq)$: conjectured in~\cite{ms}}
\put(3,40){\line(1,0){160}}

\put(5,30){$S_n$}
\put(18,32){$\S_n(2413,3142),\S_n(2413,4213),\S_n(3124,3214)$: classical result~\cite{kre}}
\put(18,24){$\I_n(021)$: proved in~\cite{cor,mash}; $\I_n(\geq,\neq,\geq),\I_n(>,-,\geq),\I_n(\geq,-,>)$: proved in~\cite{ms}}
\put(3,20){\line(1,0){160}}

\put(5,14){$B_n$}
\put(18,14){$\S_n(2\underline{41}3,3\underline{14}2)$: classical result~\cite{chung}; $\I_n(\geq,\geq,>)$: conjectured in \cite{ms}}

\put(3,4){\text{$E_{n+1}$}}
\put(18,4){\text{Simsun permutations of $[n]$: classical result \cite{sun}; $I_n(000)$: proved in~\cite{cor}}}
\put(3,10){\line(1,0){160}}
\end{picture}
\caption{Sets enumerated by  Catalan number $C_n$, Schr\"oder number $S_n$, Baxter number $B_n$ or Euler number $E_{n+1}$.
\label{patt}}
\end {figure}
Based on these results, we will investigate more connections between restricted permutations and inversion sequences by considering several classical statistics that we recall below. 
%
%

For each $\pi\in\S_n$ and each $e\in\I_n$, let
$$
\DES(\pi):=\{i\in[n-1]: \pi_i>\pi_{i+1}\}\quad \text{and}\quad\ASC(e):=\{i\in[n-1]: e_i<e_{i+1}\}
$$ 
be the {\em {\bf\em des}cent set} of $\pi$ and the {\em {\bf\em asc}ent set} of $e$, respectively. Another important property of the coding $\Theta$ is that $\DES(\pi)=\ASC(\Theta(\pi))$ for each $\pi\in\S_n$. Thus,
\begin{equation}\label{des:asc}
\sum_{\pi\in\S_n}t^{\DES(\pi)}=\sum_{e\in\I_n}t^{\ASC(e)},
\end{equation}
where $t^{S}:=\prod_{i\in S}t_i$ for any set $S$ of positive integers.   
Throughout this paper, we use the convention that if ``$\ST$'' is a set-valued statistic, then ``$\st$'' is the corresponding numerical statistic. For example, $\des(\pi)$ is the cardinality of $\DES(\pi)$ for each $\pi$. It is known that $A_n(t):=\sum_{\pi\in\S_n}t^{\des(\pi)}$ is the classical {\em$n$-th Eulerian polynomial}~\cite{fo} and each statistic whose distribution  gives $A_n(t)$ is called a {\em Eulerian statistic}. In view of~\eqref{des:asc}, ``$\asc$'' is a Eulerian statistic on inversion sequences. Let $\dist(e)$  be the {\em number of {\bf\em dist}inct positive entries} of $e$. This statistic was first introduced by Dumont~\cite{du}, who also showed that it is a Eulerian statistic on inversion sequences. Amazingly, Foata~\cite{fo} later invented two different codings of permutations  called {\em V-code} and {\em S-code} to prove the following extension of~\eqref{des:asc}.

\begin{theorem}[Foata~1977]\label{foata}
For each $\pi\in\S_n$ let $\ides(\pi):=\des(\pi^{-1})$ be the number of inverse descents of $\pi$. Then,
\begin{equation}\label{dist:asc}
\sum_{\pi\in\S_n}s^{\ides(\pi)}t^{\DES(\pi)}=\sum_{e\in\I_n}s^{\dist(e)}t^{\ASC(e)}.
\end{equation}
\end{theorem}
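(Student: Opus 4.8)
The plan is to construct an explicit bijection $\varphi:\S_n\to\I_n$ that simultaneously transports the bivariate statistic $(\ides,\DES)$ on permutations to $(\dist,\ASC)$ on inversion sequences; this is exactly the approach Foata took via his \emph{V-code} and \emph{S-code}, and reproducing it is the natural route. First I would recall the coding $\Theta$ from the introduction, which already satisfies $\DES(\pi)=\ASC(\Theta(\pi))$; the issue is that $\Theta$ does \emph{not} send $\ides$ to $\dist$, so it must be modified. The idea is to build $\varphi$ as a composition: start from a permutation $\pi$, apply a first coding (the ``V-code'' in Foata's terminology) that records the descent \emph{set} of $\pi$ faithfully as the ascent set of an intermediate inversion sequence while encoding enough of $\pi^{-1}$, and then a second recoding (the ``S-code'') that converts the information about inverse descents of $\pi$ into the count of distinct positive entries, all the while preserving the ascent set built in the first step.

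The key steps, in order, would be: (1) Define the V-code: given $\pi\in\S_n$, read $\pi$ and at position $i$ record a value determined by the relative position of $\pi_i$ among $\pi_1,\dots,\pi_i$ in a way that (a) produces a legal inversion sequence and (b) has $\ASC$ equal to $\DES(\pi)$; this is essentially $\Theta$ or a close variant, so the verification is routine. (2) Analyze what happens to $\pi^{-1}$: show that the descents of $\pi^{-1}$ correspond, under the V-code, to a specific combinatorial feature of the intermediate sequence — concretely, to the positions where a ``new'' value (one not seen before when scanning left to right, in the appropriate normalization) appears. (3) Define the S-code as the map that straightens the intermediate sequence so that this ``new value'' feature becomes literally the set of positions contributing a distinct positive entry, i.e. so that $\dist$ of the output counts exactly the inverse descents of $\pi$. (4) Check that the S-code is a bijection on $\I_n$ and that it fixes the ascent set, so that composing gives $\ASC(\varphi(\pi))=\DES(\pi)$ and $\dist(\varphi(\pi))=\ides(\pi)$ simultaneously; summing over $\pi\in\S_n$ then yields \eqref{dist:asc}.

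The main obstacle, I expect, is step (2) together with the compatibility demanded in step (4): it is easy to encode $\DES(\pi)$ as an ascent set, and separately easy to encode $\ides(\pi)$ as a distinctness count, but arranging a \emph{single} inversion sequence that does both at once requires the two encodings not to interfere. Foata's insight was precisely that the V-code handles the descent set while a carefully chosen further normalization (the S-code) can be applied \emph{without disturbing} the ascent structure, because the S-code only permutes/relabels values within blocks delimited by the ascent positions. Making this ``blockwise relabeling preserves ascents'' claim precise — identifying the right invariant of $\pi^{-1}$ that survives the V-code and is then read off as $\dist$ — is the heart of the argument; everything else is bookkeeping. An alternative, if the explicit bijection proves unwieldy to present, is a generating-function or transfer-matrix argument showing both sides satisfy the same recurrence in $n$ with the same initial conditions, refined by the last entry $e_n$ (which ranges over $\{0,1,\dots,n-1\}$) on the inversion-sequence side and by the position of $n$ on the permutation side; but the bijective proof is more illuminating and is the one I would pursue first.
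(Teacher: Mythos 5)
The paper does not actually prove Theorem~\ref{foata}: it is quoted as a classical result of Foata (1977), who established it by composing his V-code and S-code, and the paper simply cites~\cite{fo}. Your proposal correctly identifies that strategy, but as written it is a roadmap rather than a proof: every object whose existence carries the content of the statement is described only by the property it is required to have, and none is constructed. Concretely: (i) the ``V-code'' of your step (1) is said to be ``essentially $\Theta$ or a close variant,'' but $\Theta$ itself does not expose $\ides(\pi)$ in any readable form, and \emph{which} variant works is precisely the nontrivial point -- Foata's V-code is not a small perturbation of $\Theta$; (ii) step (2), the lemma characterizing the descents of $\pi^{-1}$ as a feature of the coded sequence, is the crux, and you explicitly defer it as ``the main obstacle''; (iii) the S-code of step (3) is defined circularly, as ``the map that straightens the intermediate sequence so that $\dist$ of the output counts exactly the inverse descents''; (iv) the assertion that this recoding is a bijection on $\I_n$ fixing the ascent set rests on a ``blockwise relabeling preserves ascents'' heuristic that is never made precise. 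Items (i)--(iv) are exactly where the proof lives, so there is nothing concrete for a reader to verify.

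To make this a proof you would need to write down the codings explicitly and prove the two statistic identities for them -- either by reproducing Foata's original definitions from~\cite{fo}, or, more economically, by using the single coding $\Phi$ of Baril and Vajnovszki~\cite{bv} recalled in Section~3 of this paper, which satisfies the set-valued refinement $(\VID,\DES)\pi=(\DIST,\ASC)\Phi(\pi)$ and hence yields \eqref{dist:asc} immediately upon taking cardinalities, since $\ides(\pi)=|\VID(\pi)|$ and $\dist(e)=|\DIST(e)|$. Your fallback suggestion (a recurrence refined by the last entry on one side and the position of $n$ on the other) is also not carried out and is not obviously compatible with tracking the full descent \emph{set} $t^{\DES(\pi)}$ rather than just $\des(\pi)$.
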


Partial results regarding the statistics ``$\asc$'' and ``$\dist$'' on restricted inversion sequences
have already been obtained in~\cite{cor,mash,ms}. In particular, the ascent polynomial 
$$S_n(t):=\sum_{e\in\I_n(021)}t^{\asc(e)}$$ was shown to be {\em palindromic} via a connection with some {\em black-white rooted binary trees} in~\cite{cor}. Inspired by Foata's result, we will consider the joint distribution of ``$\asc$'' and ``$\dist$'' on restricted inversion sequences and prove several  restricted versions of~\eqref{dist:asc}. 
Another interesting statistic for $e\in\I_n$ is the {\em {\bf\em last} entry} of $e$, that we denote $\last(e)$. This statistic turns out to be useful in solving some real root problems in~\cite{sv} and will also lead us to solve two enumeration conjectures.

The rest of this paper deals with refinements of Catalan, Schr\"oder, Baxter and Euler numbers. 
Two highlights of our results are: (i) a bijection between $000$-avoiding inversion sequences and Simsun permutations (see Section~\ref{sec:euler}), which is constructed in the spirit of  Sch\"utzenberger's {\em jeu de taquin}; (ii) a refinement of a conjecture due to Martinez and Savage~\cite{ms} that asserts the cardinality of $\I_n(\geq,\geq,>)$ is the $n$-th {\em Baxter number} (denoted $B_n$), which is proved via Bousquet-M\'elou's {\em obstinate kernel method} (see Section~\ref{sec:bax}).

\section{Catalan numbers}
\label{sec:cat}

Let  $(\rho_1,\rho_2,\rho_3)$ be a relation triple in $\{(\geq,-,\geq),(\geq,-,>),(\geq,\geq,>)\}$.
We introduce the parameter $\cri(e)$ for each $e\in\I_n(\rho_1,\rho_2,\rho_3)$, that we call
the {\em {\bf\em cri}tical value} of $e$, as the minimal integer $c$ such that
$(e_1,\ldots,e_n,c)\in\I_{n+1}(\rho_1,\rho_2,\rho_3)$. Note that ``$\cri$'' depends on the relation triple
$(\rho_1,\rho_2,\rho_3)$.  For example, if we consider $e=(0,1,0,2,2,4)$ as inversion sequence in
$\I_6(\geq,-,>)$, then $\cri(e)=2$. However, $\cri(e)=3$ when $e$ is considered as an inversion sequence
in $\I_6(\geq,-,\geq)$. The reason to introduce ``$\cri$'' is that if $e\in\I_n(\rho_1,\rho_2,\rho_3)$,
then $(e_1,\ldots,e_n,k)$ is in $\I_{n+1}(\rho_1,\rho_2,\rho_3)$ if and only if $\cri(e)\leq k\leq n$.
This parameter will play an important role in our study of the {\em Catalan}, {\em Schr\"oder} and
{\em Baxter triangles} induced by the statistic ``$\last$''. 

As a warm-up, we first show how the critical value can help to prove that the cardinality of  $\I_n(\geq,-,\geq)$ is $C_n$, which was conjectured in~\cite{ms}.
Let us define the refinement
$$
C_{n,k}:=|\{e\in\I_n(\geq,-,\geq): \last(e)=k\}|.
$$
The following recurrence shows that the numbers $C_{n,k}$ generate the  {\em Catalan triangle} that has already been widely studied (see~\href{https://oeis.org/A009766}{OEIS: A009766}).
\begin{proposition}\label{cat:tria}
For $0\leq k\leq n-1$, we have the three-term recurrence 
$$
C_{n,k}=C_{n,k-1}+C_{n-1,k}.
$$
Consequently, $C_{n,k}=\frac{n-k}{n}{n-1+k\choose k}$ are Catalan triangle. 

\end{proposition}
\begin{proof}
Let $\mathfrak{C}_{n,k}:=\{e\in\I_n(\geq,-,\geq): \last(e)=k\}$. We divide $\mathfrak{C}_{n,k}$ into the disjoint union $\mathfrak{A}_{n,k}\cup\mathfrak{B}_{n,k}$, where 
$$\mathfrak{A}_{n,k}=\{e\in\mathfrak{C}_{n,k}: \cri(e_1,e_2,\ldots,e_{n-1})=k\}$$ and $\mathfrak{B}_{n,k}=\mathfrak{C}_{n,k}\setminus\mathfrak{A}_{n,k}$.
Since $\cri(e_1,e_2,\ldots,e_{n-1})\leq k-1$ for $e\in\mathfrak{B}_{n,k}$, the mapping that sends $(e_1,e_2,\ldots,e_{n-1},k)$ to $(e_1,e_2,\ldots,e_{n-1},k-1)$ is a  bijection from $\mathfrak{B}_{n,k}$ to 
$\mathfrak{C}_{n,k-1}$. Therefore, the cardinality of $\mathfrak{B}_{n,k}$ is $C_{n,k-1}$ and so it remains to show that $|\mathfrak{A}_{n,k}|=C_{n-1,k}$. 

Now, we are going to construct a bijection $g:\mathfrak{A}_{n,k}\rightarrow\mathfrak{C}_{n-1,k}$, which will complete the proof of the recurrence for $C_{n,k}$. For each $e\in\mathfrak{A}_{n,k}$,  there is a unique index $i$ such that $e_i=k-1$ and $e_{i+1}\leq k-1$. Define $g(e)$ to be the inversion sequence obtained from $e$ by deleting $e_{n-1}$, if $e_{n-1}=n-2$, or by deleting $e_i$,
otherwise. For example, we have $g(0,1,1,3,2)=(0,1,1,2)$ while $g(0,1,1,2,2)=(0,1,2,2)$.
It is routine to check that $g$ is actually a bijection.
\end{proof}

The beginning of the Catalan triangle $C_{n,k}$ are:
 \begin{eqnarray*}
\begin{array}{ccccccccc}
   1  &      &      &      &      &    \\
   1& 1    &      &      &      &        \\
   1  & 2    & 2    &      &      &          \\
  1  & 3   & 5    &  5   &      &           \\
  1  & 4  & 9   &  14   &  14  &         \\
  1 & 5    & 14  &  28  & 42    & 42.   
\end{array}
\end{eqnarray*}
For each $\pi\in\S_n$, let  $\last(\pi)$ be the {\em{\bf\em last} letter} of $\pi$. Connolly et al.~\cite[Corollary~1]{cgg} showed that $|\{\pi\in\S_n(123):\pi^{-1}(n)=n-k\}|=C_{n,k}$. Since  $\pi\in\S_n(123)\Leftrightarrow\pi^{-1}\in\S_n(123)$, it then follows that  $|\{\pi\in\S_n(123):\last(\pi)=n-k\}|=C_{n,k}$. This is equivalent to the following statement. 

\begin{corollary}For $n\geq1$ and $0\leq k\leq n-1$, we have
$$|\{e\in\I_n(\geq,-,\geq):\last(e)=k\}|=|\{\pi\in\S_n(321):\last(\pi)=k+1\}|.
$$
\end{corollary}

The following stronger equidistribution involving the pair $(\dist,\last)$ is also true, which will be proved by generating function.
\begin{theorem}\label{catalan:asc}
For each $\pi\in\S_n$, let $\asc(\pi):=n-1-\des(\pi)$ be the ascent number of $\pi$. Then,
 \begin{equation}\label{cat:equi}
\sum_{\pi\in\S_n(321)}t^{\asc(\pi)}u^{\last(\pi)}=\sum_{e\in\I_n(\geq,-,\geq)}t^{\dist(e)}u^{\last(e)+1}.
 \end{equation}
\end{theorem}

We first compute the generating function for the left-hand side of~\eqref{cat:equi}. We will apply a simple bijection from Dyck paths to $321$-avoiding permutations. Recall that a {\em Dyck path} of length $n$ is a lattice path in $\N^2$ from $(0,0)$ to
$(n,n)$ using the {\em east step} $(1,0)$ and the {\em north step} $(0,1)$, which does
not pass above the line $y=x$. The {\em height of an east step} in a Dyck path is  
the number of north steps before this east step. It is clear that a Dyck path can be
represented as $d_1d_2\ldots d_n$, where $d_i$ is the height of its $i$-th east step. See Fig.~\ref{ex:varphi} for the Dyck path $000344566$. Denote by $\mathcal{D}_n$ the set of all Dyck paths of length $n$.

For our purpose, we will give a new description of a bijection  $\psi:\mathcal{D}_n\rightarrow\S_n(321)$ that was previously used by Elizalde in~\cite[Section~3]{eli}. For a Dyck path $D=d_1d_2\cdots d_n\in\mathcal{D}_n$, we define $\psi(D)=\pi=\pi_1\pi_2\cdots\pi_n$, where 
\begin{itemize}
\item $\pi_i=d_i+1$ if $d_i\neq d_{i+1}$ or $i=n$; otherwise
\item  if $i$ is the $j$-th smallest integer in $\{k\in[n-1]: d_k=d_{k+1}\}$, then $\pi_i$ is the $j$-th smallest integer in $[n]\setminus\{d_1+1,d_2+1,\ldots,d_n+1\}$.
\end{itemize}
See Fig.~\ref{ex:varphi} for  a  visualization of this bijection for the Dyck path $000344566$.
\begin{figure}[ht]
\begin{center}
\begin{tikzpicture}[scale=.5]
\draw[step=1,color=gray](0,0) grid (9,9);
\draw [thick,color=magenta](0,2)--(1,1)(0,1)--(2,3)(1,3)--(2,2)(4,7)--(5,8)(4,8)--(5,7)(7,8)--(8,9)(7,9)--(8,8);
\draw [thick,color=blue](2,0)--(3,1)(3,0)--(2,1)(3,3)--(4,4)(3,4)--(4,3)(5,4)--(6,5)(6,4)--(5,5)(6,5)--(7,6)(7,5)--(6,6)(8,6)--(9,7)(8,7)--(9,6);

\draw [very thick](0,0)--(3,0)--(3,3)--(4,3)--(4,4)--(6,4)--(6,5)--(7,5)--(7,6)--(9,6)--(9,9);

\draw(.5,-0.5) node{\magenta{$2$}};\draw(1.5,-0.5) node{\magenta{$3$}};
\draw(2.5,-0.5) node{\blue{$1$}};\draw(3.5,-0.5) node{\blue{$4$}};
\draw(4.5,-0.5) node{\magenta{$8$}};
\draw(5.5,-0.5) node{\blue{$5$}};\draw(6.5,-0.5) node{\blue{$6$}};
\draw(7.5,-0.5) node{\magenta{$9$}};
\draw(8.5,-0.5) node{\blue{$7$}};
\end{tikzpicture}
\end{center}
\caption{An example of the bijection $\psi:\mathcal{D}_n\rightarrow\S_n(321)$.}
\label{ex:varphi}
\end{figure}
It is  known that a permutation is 321-avoiding if and only if both the subsequence formed by its excedance values and the one formed by the remaining non-excedance values are increasing. Using this characterization, one can check easily that $\psi$ is in fact a bijection since each $d_i$ of $D$ with $d_i\neq d_{i+1}$ or $i=n$ becomes a non-excedance value of $\psi(D)$ (see the blue crosses in  Fig.~\ref{ex:varphi}). Let us introduce the following two statistics:
\begin{itemize}
\item $\last(D)=d_n$, the height of last east step of $D$;
\item $\segm(D)$, the number of {\bf segm}ents of $D$ with length greater than $1$, where a {\em segment} is a maximal string of consecutive east steps of the same height.
\end{itemize}
Continuing with our Dyck path in Fig.~\ref{ex:varphi}, we have $\last(D)=6$ and $\segm(D)=3$.
The bijection $\psi$ has the following property. 

\begin{lemma}
The bijection  $\psi:\mathcal{D}_n\rightarrow\S_n(321)$ transforms  $(\segm,\last)$ to $(\des,\last-1)$. 
\end{lemma}
Let 
$$
C=C(t,u,x):=\sum_{n\geq1}x^n\sum_{D\in\mathcal{D}_n}t^{\segm(D)}u^{\last(D)}=x+(u+t)x^2+\cdots.
$$
We have the following  expression for $C(t,u,x)$.
\begin{proposition} The function $C(t,u,x)$ is algebraic and has the expression
\begin{equation}\label{gen:C}
C(t,u,x)=\frac{2x(tx-x+1)}{1+2x(ux-tux-1)+\sqrt{1+4ux(ux-tux-1)}}.
\end{equation}
\end{proposition}

\begin{proof}
Let $\mathcal{B}_n$ be the set of Dyck paths in $\mathcal{D}_n$ that begin with an east step follows immediately by a north step. If we introduce 
$$
B(t,u,x):=\sum_{n\geq1}x^n\sum_{D\in\mathcal{B}_n}t^{\segm(D)}u^{\last(D)},
$$
then clearly 
\begin{equation}\label{gen:B}
B(t,u,x)=x+uxC(t,u,x).
\end{equation}

Each Dyck path $D=d_1d_2\cdots d_n\in\mathcal{D}_n\setminus\mathcal{B}_n$ with $k=\min\{i\geq2: d_{i+1}=i\text{ or $i=n$}\}$ can be decomposed uniquely into a pair $(D_1,D_2)$  of Dyck paths, where $D_1=d_2d_3\cdots d_k\in\mathcal{D}_{k-1}$ and $D_2=(d_{k+1}-k)(d_{k+2}-k)\cdots(d_n-k)\in\mathcal{D}_{n-k}$ (possibly empty). This decomposition is reversible and satisfies the following properties: 
$$
\last(D)=\chi(D_2\neq\emptyset)\cdot(k+\last(D_2))+\chi(D_2=\emptyset)\cdot\last(D_1)
$$
and
$$
\segm(D)=\segm(D_1)+\segm(D_2)+\chi(D_1\in\mathcal{B}_{k-1}),
$$
where $\chi(\mathsf{S})$ equals $1$, if the statement $\mathsf{S}$ is true; and $0$, otherwise. Turning this decomposition into generating functions then gives 
\begin{equation}\label{gen:A}
C-B=txB+utxB(t,1,ux)C+x(C-B)+ux(C(t,1,ux)-B(t,1,ux))C.
\end{equation}
Setting $u=1$ in~\eqref{gen:A} and~\eqref{gen:B}, we can solve the two equations to get
$$
C(t,1,x)=\frac{1+2x(x-1-tx)-\sqrt{1-4x(1-x+tx)}}{2x(tx+1-x)}.
$$
Substituting this into~\eqref{gen:A} we get~\eqref{gen:C}. 
\end{proof}

Next we are going  to calculate the generating function for the right-hand side of~\eqref{cat:equi}. Let us define
$$
\tilde{C}=\tilde{C}(t,u,x):=\sum_{n\geq1}x^n\sum_{e\in\I_n(\geq,-,\geq)}t^{n-1-\dist(e)}u^{\last(e)}=x+(u+t)x^2+\cdots.
$$
A decomposition of $(\geq,-,\geq)$-avoiding inversion sequences similar to that of Dyck paths enables us to obtain the following expression for $\tilde{C}(t,u,x)$.
\begin{proposition}
The function $\tilde{C}(t,u,x)$ is algebraic and has the expression
\begin{equation}\label{gen:CC}
\tilde{C}(t,u,x)=\frac{(tx-x+1)\biggl(1-\sqrt{1+4ux(ux-tux-1)}\biggr)}{(tux+1-ux)\biggl(2u-1+\sqrt{1+4ux(ux-tux-1)}\biggr)}.
\end{equation}
\end{proposition}
\begin{proof}
Let $e=(e_1,\ldots,e_n)\in\I_n(\geq,-,\geq)$. We distinguish the following two cases:
\begin{itemize}
\item If $e_n=n-1$, then 
$$
\dist(e)=\dist(e_1,\ldots,e_{n-1})+\chi(n\neq1)\text{ and }\last(e)=n-1.
$$
\item If $k=\max\{i: e_i=i-1\}<n$, then it is straightforward to show that $e$ can be decomposed into two smaller inversion sequences: $f=(e_1,\ldots,e_{k-1},e_{k+1})$ in $\I_k(\geq,-,\geq)$ 
and $g=(e_{k+2}-k, e_{k+3}-k,\ldots,e_n-k)$ in $\I_{n-1-k}(\geq,-,\geq)$ (possibly empty). 
This decomposition is reversible and satisfies the following properties: 
$$
\last(e)=\chi(g=\emptyset)\cdot\last(f)+\chi(g\neq\emptyset)\cdot(\last(g)+k)
$$
and
$$
\dist(e)=\dist(f)+\chi(e_{k+1}\neq k-1)+\chi(g\neq\emptyset)\cdot(\dist(g)+1).
$$
\end{itemize}

For convenience, we introduce 
$$
\tilde{B}=\tilde{B}(t,u,x):=\sum_{n\geq1}x^n\sum_{e\in\I_n(\geq,-,\geq)\atop e_n={n-1}}t^{n-1-\dist(e)}u^{\last(e)}.
$$
It is clear that 
\begin{equation}
\tilde{B}(t,u,x)=x+x\tilde{C}(t,1,ux).
\end{equation}
Translating the above decomposition of $(\geq,-\geq)$-avoiding inversion sequences into generating function yields
\begin{equation}\label{gen:AA}
\tilde{C}-\tilde{B}=tx\tilde{B}+x(\tilde{C}-\tilde{B})+tx\tilde{B}(t,1,ux)\tilde{C}+x(\tilde{C}(t,1,ux)-\tilde{B}(t,1,ux))\tilde{C}.
\end{equation}
Similar to~\eqref{gen:A}, we can solve~\eqref{gen:AA} to obtain~\eqref{gen:CC}.
\end{proof}

\begin{proof}[Proof of Theorem~\ref{catalan:asc}] To check the right-hand side of~\eqref{gen:C} equals that of~\eqref{gen:CC} is routine (by Maple) and we conclude that $C(t,u,x)=\tilde{C}(t,u,x)$, which is equivalent to equidistribution~\eqref{cat:equi}.
\end{proof}
\section{Schr\"oder numbers}
\subsection{A new Schr\"oder triangle}
\begin{theorem}\label{thm:sch}
For $n\geq1$ and $0\leq k\leq n-1$, we have
\begin{equation}\label{sch:tri}
|\{e\in\I_n(\geq,-,>): \last(e)=k\}|=|\{e\in\I_n(021): \last(e)\equiv k+1 (\mod\,n)\}|.
\end{equation}
\end{theorem}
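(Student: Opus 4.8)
My plan is to set up a combinatorial bijection between the two families, organized by the statistic $\last$, using the critical value machinery of Section~\ref{sec:cat}. First I would prove a recurrence for the left-hand triangle $L_{n,k} := |\{e\in\I_n(\geq,-,>): \last(e)=k\}|$ along exactly the lines of Proposition~\ref{cat:tria}: decompose $\{e\in\I_n(\geq,-,>): \last(e)=k\}$ into the part where $\cri(e_1,\ldots,e_{n-1}) = k$ and the part where it is $\leq k-1$; the latter maps bijectively onto $\last(e)=k-1$ by decrementing the final entry, and the former should biject with a smaller restricted inversion sequence by a deletion map analogous to the map $g$ there. I expect this to yield a three-term (Schr\"oder-type) recurrence $L_{n,k} = L_{n,k-1} + L_{n-1,k-1} + L_{n-1,k}$ or a close variant, with boundary values $L_{n,0}$, $L_{n,n-1}$ computed directly. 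The key subtlety here is that the relation triple is now $(\geq,-,>)$ rather than $(\geq,-,\geq)$, which shifts the critical value by one (as the worked example $e=(0,1,0,2,2,4)$ in the text illustrates), so the deletion map and the boundary bookkeeping must be adjusted accordingly.

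Next I would establish the matching recurrence for the right-hand side $R_{n,k} := |\{e\in\I_n(021): \last(e)\equiv k+1 \pmod n\}|$. The congruence is there precisely because $\last(e)$ for $e\in\I_n(021)$ ranges over $\{0,1,\ldots,n-1\}$ while we want to index by $\{0,\ldots,n-1\}$ with a cyclic shift: $k=n-1$ on the left corresponds to $\last(e)=0$ on the right. For $\I_n(021)$ the natural recursion is on whether $e_n$ equals $n-1$ (a "free" last letter that imposes nothing) or is smaller, and one tracks how appending a letter to an element of $\I_{n-1}(021)$ preserves $021$-avoidance: appending $c$ is legal iff $c \geq$ (some threshold determined by the positions of the two smallest-then-larger pattern), which again is governed by a critical value. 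I would translate this into a recurrence in $n$ and in the shifted last-entry index and check it agrees termwise with the $L_{n,k}$ recurrence, then match initial conditions at $n=1$.

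The cleanest finish, if the two recurrences and boundary conditions coincide, is simply induction on $n$. If instead the recurrences are not literally identical, the fallback is to construct the bijection directly: send $e\in\I_n(\geq,-,>)$ to some $\phi(e)\in\I_n(021)$ recursively, peeling off the last letter on both sides according to the critical-value dichotomy, so that $\last(e)=k$ forces $\last(\phi(e))\equiv k+1 \pmod n$. I anticipate that \textbf{the main obstacle is the boundary case} $k=n-1$ versus $\last = 0$: on the left, $\last(e)=n-1$ means the last entry is as large as possible and $e_{n-1}=n-2$ need not hold, whereas on the right $\last(\phi(e))=0$ is the generic small case; reconciling these two "extremal-looking but opposite" regimes — and verifying the deletion/insertion maps stay inside the respective pattern classes across this wrap-around — is where the real work lies. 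The linear-relations arithmetic (expanding the recurrences, checking the shift) should be routine once the structural decomposition is pinned down.
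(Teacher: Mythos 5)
Your overall strategy is the same as the paper's: derive a recurrence for the left-hand triangle from the critical-value decomposition of Section~\ref{sec:cat}, show the right-hand side satisfies the same recurrence, and induct on $n$. However, the middle step as you have planned it would fail. The three-term recurrence you expect, $L_{n,k}=L_{n,k-1}+L_{n-1,k-1}+L_{n-1,k}$, is false: direct enumeration gives $L_{3,0}=L_{3,1}=2$, $L_{4,0}=4$ and $L_{4,1}=6$, whereas your recurrence would predict $L_{4,1}=4+2+2=8$. The underlying reason is that a single deletion map analogous to $g$ does not exist here: the subfamily $\mathcal{A}_{n,k}=\{e:\last(e)=k,\ \cri(e_1,\ldots,e_{n-1})=k\}$ no longer has cardinality $L_{n-1,k}$ but rather $2L_{n-1,k}-L_{n-1,k-1}$. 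The paper handles this by splitting $\mathcal{A}_{n,k}$ a second time, into the part with $e_{n-1}=n-2$ and $\cri(e_1,\ldots,e_{n-2})=k$ (a copy of $\mathcal{A}_{n-1,k}$, of size $L_{n-1,k}-L_{n-1,k-1}$) and a complementary part that maps bijectively to $\{e\in\I_{n-1}(\geq,-,>):\last(e)=k\}$ by deleting the rightmost entry equal to $k$ within a suitable suffix. This yields the four-term recurrence $L_{n,k}=L_{n,k-1}+2L_{n-1,k}-L_{n-1,k-1}$, with a subtraction your positive three-term ansatz cannot produce; the extra case split is a genuine additional idea beyond the Catalan argument, not a routine adjustment of $g$.

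A second, smaller gap: this recurrence holds only for $k\leq n-3$ (for instance $L_{4,2}=6$ while the recurrence would give $6+4-2=8$), so the three top diagonals $k=n-1,n-2,n-3$ must be verified separately. You flagged $k=n-1$ as the main obstacle, but in fact all three boundary cases are the easy part — for $k=n-1$ both sides equal the full count of the respective class of length $n-1$, since appending a maximal entry to an element of $\I_{n-1}(\geq,-,>)$ and appending $0$ to an element of $\I_{n-1}(021)$ are both always legal — and the real work lies in the interior recurrence and in checking that the cyclically shifted statistic on $\I_n(021)$ satisfies the same four-term relation.
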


Note that this result is obviously true for $k=n-1,n-2,n-3$. Let us define the {\em Schr\"oder triangle} 
$S_{n,k}:=|\{e\in\I_n(\geq,-,>): \last(e)=k\}|$.
The first values of $S_{n,k}$ are:
\begin{eqnarray*}
\begin{array}{ccccccccccc}
   1  &      &      &      &      &      \\
   1& 1    &      &      &      &     \\
   2  & 2    & 2    &      &      &   \\
  4  & 6   & 6    &  6   &      &     \\
  8  & 16   & 22   &  22   &  22   &  \\
  16 & 40  & 68  & 90 &  90 &  90.
\end{array}
\end{eqnarray*}
We have the following simple recurrence for $S_{n,k}$.
\begin{lemma}\label{lem:sch}
For $0\leq k\leq n-3$, we have the  four-term recurrence 
$$
S_{n,k}=S_{n,k-1}+2S_{n-1,k}-S_{n-1,k-1}.
$$
\end{lemma}
\begin{proof}
As in the Catalan case, we divide the set $\mathcal{S}_{n,k}:=\{\I_n(\geq,-,>):\last(e)=k\}$ into
the disjoint union $\mathcal{A}_{n,k}\cup\mathcal{B}_{n,k}$, where 
$$\mathcal{A}_{n,k}:=\{e\in\mathcal{S}_{n,k}: \cri(e_1,\ldots,e_{n-1})=k\}$$
and 
$\mathcal{B}_{n,k}=\mathcal{S}_{n,k}\setminus \mathcal{A}_{n,k}$. Clearly, there is a natural bijection
from $\mathcal{B}_{n,k}$ to $\mathcal{S}_{n,k-1}$, which maps $(e_1,\ldots,e_{n-1},k)$ to
$(e_1,\ldots,e_{n-1},k-1)$. Therefore, the cardinality of $\mathcal{B}_{n,k}$ is $S_{n,k-1}$ and so it
remains to show $|\mathcal{A}_{n,k}|=2S_{n-1,k}-S_{n-1,k-1}$, assuming $k\leq n-3$. 
To do this, we further divide $\mathcal{A}_{n,k}$ into the disjoint union
$\mathcal{C}_{n,k}\cup\mathcal{D}_{n,k}$, where 
$$
\mathcal{C}_{n,k}:=\{e\in\mathcal{A}_{n,k}: e_{n-1}=n-2,\ \cri(e_1,\ldots,e_{n-2})=k\}\\
$$
and $\mathcal{D}_{n,k}=\mathcal{A}_{n,k}\setminus\mathcal{C}_{n,k}$. Obviously, we have
$\{(e_1,\ldots,e_{n-2},e_n): e\in \mathcal{C}_{n,k}\}=\mathcal{A}_{n-1,k}$.
Thus,
$$|\mathcal{C}_{n,k}|=|\mathcal{A}_{n-1,k}|=|\mathcal{S}_{n-1,k}|-|\mathcal{B}_{n-1,k}|=S_{n-1,k}-S_{n-1,k-1},$$
which will end the proof once we can define a bijection from  $\mathcal{D}_{n,k}$ to $\mathcal{S}_{n-1,k}$. 
 
For each $e\in\mathcal{D}_{n,k}$, if $e_i$ is the left-most entry that equals $\cri(e)=k$,
then the entries $e_i,e_{i+1},\ldots,e_{n-1}$ of $e$ must satisfy: 
\begin{itemize}
\item[(i)] $e_i=k$ and $e_{i+1}\leq k$;
\item[(ii)] $k\leq e_{i+2}\leq e_{i+3}\leq\cdots\leq e_{n-1}$, where the inequalities  after the entries greater
than $k$ are strict.
\end{itemize}
Now removing the right-most entry $e_j$, such that $e_j=k$ and $i\leq j\leq n-1$, from $e$ results in an inversion sequence in $\mathcal{S}_{n-1,k}$ (since $e_{n-1}\leq n-3$) that we denote $f(e)$. For example, we have $f(0,1,2,0,2,2)=(0,1,2,0,2)$, $f(0,1,0,2,2,2)=(0,1,0,2,2)$ and $f(0,1,2,1,3,2)=(0,1,1,3,2)$. We claim that the map $f:\mathcal{D}_{n,k}\rightarrow\mathcal{S}_{n-1,k}$  is a bijection. 
\end{proof}

\begin{proof}[Proof of Theorem~\ref{thm:sch}]
It is not hard to show that the right-hand side of~\eqref{sch:tri} satisfies the same recurrence relation as $S_{n,k}$, which completes the proof of the theorem.
\end{proof}

One may ask if there is any other interpretation of $S_{n,k}$ in terms of pattern-avoiding permutations. The following conjecture will answer this question completely, if true.

\begin{conjecture}\label{schroder:asc}
Let $(\sigma,\pi)$ be a pair of patterns of length $4$. Then,
$$
S_{n,k}=|\{\pi\in\S_n(\sigma,\pi): \last(\pi)-1=k\}|
$$
 for any $0\leq k<n$ if and only if $(\sigma,\pi)$ is one of the following nine pairs:
 \begin{align*}
 &\qquad\qquad\qquad\qquad\,\,(4321,3421),(3241,2341),(2431,2341),\nonumber\\
 &(4231,3241),(4231,2431),(4231,3421),(2431,3241),(3421,2431),(3421,3241).
 \end{align*}
 Moreover, if $(\sigma,\pi)$ is one of the last six  pairs (i.e. these in second line above), then
 $$
\sum_{\pi\in\S_n(\sigma,\pi)}t^{\asc(\pi)}u^{\last(\pi)}=\sum_{e\in\I_n(\geq,-,>)}t^{\dist(e)}u^{\last(e)+1}.
 $$
\end{conjecture}


\subsection{Double Eulerian equidistributions}
\subsubsection{Statistics}
Let $\pi\in\S_n$ be a permutation. The {\em {\bf\em v}alues of {\bf\em i}nverse {\bf\em d}escents} of $\pi$ is  
$$\VID(\pi):=\{2\leq i\leq n:\pi_i+1 \,\,\text{appears to the left of}\,\, \pi_i\},$$
which  is an important set-valued extension of ``$\ides$''. The {\em positions of {\bf\em l}eft-to-right {\bf\em ma}xima} of $\pi$ is $\LMA(\pi):=\{i\in[n]:\pi_i>\pi_j\,\, \text{for all $1\leq j<i$}\}$. Similarly, we can define the {\em positions of {\bf\em l}eft-to-right {\bf\em mi}xima} $\LMI(\pi)$, the {\em positions of {\bf\em r}ight-to-left {\bf\em ma}xima} $\RMA(\pi)$ and  the {\em positions of {\bf\em r}ight-to-left {\bf\em mi}nima} $\RMI(\pi)$ of $\pi$.

Let $e\in\I_n$ be an inversion sequence. The positions of the {\em last occurrence of {\bf\em dist}inct positive entries} of $e$ is $\DIST(e):=\{2\leq i\leq n: e_i\neq0\,\,\text{and $e_i\neq e_j$ for all $j>i$}\}$. The {\em  positions of {\bf\em zero}s in $e$} is $\ZERO(e):=\{i\in[n]: e_i=0\}$. The {\em positions of the {\bf\em e}ntries of $e$ that achieve {\bf\em ma}ximum} is $\EMA(e):=\{i\in[n]: e_i=i-1\}$ and the {\em positions of {\bf\em r}ight-to-left {\bf\em mi}nima} of~$e$ is $\RMI(e):=\{i\in[n]: e_i<e_j\,\, \text{for all $ j>i$}\}$.

\subsubsection{A sextuple equidistribution}
\label{sec:sex}
Note that an inversion sequence avoids $021$ if and only if its positive entries are weakly increasing. Permutations avoiding the patterns $2413$ and $3142$ are called {\em separable permutations} (cf.~\cite{ki}). Separable permutations and $021$-avoiding inversion sequences are all  enumerated by the large Schr\"oder numbers. Moreover, the work by
Corteel et al.~\cite{cor}  and Fu et al.~\cite{flz} show that 
$$
\sum_{e\in\I_n(021)}t^{\asc(e)}=\sum_{\pi\in\S_n(2413,3142)}t^{\des(\pi)}.
$$
It is this observation that inspires us to find the following sextuple equidistribution, which is an extension of a restricted version of Theorem~\ref{foata}.
\begin{theorem}\label{thm:sex} 
There exists a bijection $\Psi:\I_n(021) \rightarrow \S_n(2413,4213)$ such that
$$
(\DIST,\ASC,\ZERO,\EMA,\RMI,\EXPO)e=(\VID,\DES,\LMA,\LMI,\RMA,\RMI)\Psi(e)
$$
for each $e\in\I_n(021)$, where $\EXPO$ is the {\em {\bf\em expo}sed positions} of e. 
\end{theorem}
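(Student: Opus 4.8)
\textbf{Plan for the proof of Theorem~\ref{thm:sex}.}
The strategy is to build the bijection $\Psi$ directly from the outline $d(e)$ of $e\in\I_n(021)$, since the outline already encodes all the relevant data of $e$ in a geometric form. First I would record the following elementary translations between $e$ and $d(e)$: the red east steps of $d(e)$ sit exactly at the positions in $\ZERO(e)$; the positions $i$ with $e_i=i-1$ (i.e.\ $\EMA(e)$) correspond to the east steps that touch the diagonal $y=x$ from below at their starting corner; the set $\RMI(e)$ is read off from the ``staircase from the right'' of the path; and $\DIST(e)$ and $\EXPO(e)$ are precisely the two auxiliary sets $\mathcal{C}(e)$-corrected statistics defined in Definition~\ref{outline}. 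The key point is that $\ASC(e)$ is the set of $i$ for which the $(i+1)$-st east step is strictly higher than the $i$-th, which is a purely path-theoretic condition on $d(e)$. So the whole theorem reduces to constructing a bijection from two-colored Dyck paths of semilength $n$ (with the colouring rule coming from an inversion sequence) to $\S_n(2413,4213)$ carrying six path statistics to the six permutation statistics.

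Next I would set up the target side. The class $\S_n(2413,4213)$ is Schröder-counted (Fig.~\ref{patt}), so I expect it to admit a recursive block decomposition compatible with a Dyck-path-with-one-extra-colour structure; concretely, I would look for a decomposition of $\pi\in\S_n(2413,4213)$ by the position of $n$ (or by $\pi_n$ and the left-to-right maxima) that mirrors the first-return / last-east-step decomposition of the outline. I would then define $\Psi$ inductively using this decomposition, matching the last east step of $d(e)$ with $\last(\pi)$: since $\EMA$, $\ZERO$, $\RMI$ match with $\LMI$, $\LMA$, $\RMA$, the recursion should split $\pi$ into the part below the last left-to-right maximum and the part above it, paralleling the split of the inversion sequence at its last zero (or last maximum-achieving entry). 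The bijection $\Psi$ should be defined so that an entry $e_i\neq0$ contributes a non-red east step at height $e_i$, which on the permutation side becomes a non-left-to-right-maximum at a controlled position, while $e_i=0$ contributes a red step and hence a left-to-right maximum.

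Once $\Psi$ is defined I would verify the six equidistributions one at a time. The pairs $(\ZERO,\LMA)$, $(\EMA,\LMI)$, and $(\RMI,\RMA)$ should be immediate from the recursive construction, since each of these is governed by the block structure. The pair $(\ASC,\DES)$ is the heart of the matter: an ascent $e_i<e_{i+1}$ of the inversion sequence must be shown to become a descent $\pi_i>\pi_{i+1}$ of $\Psi(e)$, and here I would exploit the fact that in a $2413,4213$-avoiding permutation the descent set is determined by the relative heights of consecutive ``slots'' in exactly the way the path heights $d_i$ are determined. Finally the pair $(\DIST,\VID)$ and the self-paired $\EXPO\mapsto\RMI$ should follow once one checks that the correction set $\mathcal{C}(e)$ matches the zeros of $\pi$ that are \emph{not} followed by their successor value, and that an exposed position of the outline corresponds exactly to a right-to-left minimum of $\pi$.

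\textbf{Main obstacle.} The genuinely delicate step is proving that the proposed recursive map actually \emph{lands in} $\S_n(2413,4213)$ and is surjective onto it, i.e.\ that the two forbidden patterns $2413$ and $4213$ correspond precisely to the absence of a ``$021$'' in the inversion sequence plus the colouring constraint on the outline, with no extra permutations sneaking in or out. Verifying pattern-avoidance through a recursive insertion is typically where such arguments become technical; I would handle it by characterising $\S_n(2413,4213)$ intrinsically (a permutation avoids both iff it has no occurrence of a specific ``small'' configuration after each left-to-right maximum) and then checking that this intrinsic condition is the image under $\Psi$ of the weakly-increasing-positive-entries condition defining $\I_n(021)$. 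The simultaneous preservation of all six statistics then has to be threaded through this same induction, which is bookkeeping-heavy but not conceptually hard once the bijection itself is pinned down.
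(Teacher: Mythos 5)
Your reduction to the outline is sound, and your dictionary between the statistics of $e$ and the geometry of $d(e)$ is essentially correct (the red steps are $\ZERO(e)$, the diagonal-touching steps are $\EMA(e)$, and $\ASC(e)$ is read off from consecutive step heights). But the proposal stops exactly where the proof has to begin: the bijection $\Psi$ is never defined. What you offer in its place --- ``I would look for a decomposition of $\pi\in\S_n(2413,4213)$ by the position of $n$ \dots that mirrors the first-return decomposition of the outline'' --- is a hope, not a construction, and you yourself flag the fatal point: nothing in the proposal shows that such a recursive decomposition of $\S_n(2413,4213)$ exists, that it is compatible with the two-colouring of the outline, or that it can be threaded through all six statistics simultaneously. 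This last issue is serious because several of the statistics are not local to blocks: $\DIST(e)$ records \emph{last} occurrences of values over the whole sequence, $\VID(\pi)$ asks where $\pi_i+1$ sits relative to $\pi_i$ globally, and $\EXPO(e)$ involves the correction set $\mathcal{C}(e)$, which compares entries across the entire word. A first-return-style induction does not obviously control any of these, and no mechanism is proposed. (Also, the claim that $\mathcal{C}(e)$ should match ``the zeros of $\pi$ that are not followed by their successor value'' is not meaningful as stated --- a permutation has no zeros --- which suggests the $\EXPO\mapsto\RMI$ correspondence has not actually been worked out.)

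For comparison, the paper's construction is not recursive at all: $\Psi(e)$ is obtained by sweeping lines parallel to $y=x$ across the outline in a specified order and labelling the east steps with $1,2,\dots,n$ as they are touched; the word of labels, read by position, is the permutation. Because the label $\pi_i$ is attached directly to the $i$-th east step, the positional correspondences ($\ASC\mapsto\DES$, $\ZERO\mapsto\LMA$, $\EMA\mapsto\LMI$, $\RMI\mapsto\RMA$) fall out of the sweeping order essentially by construction, and the avoidance of $2413$ and $4213$ is checked against the geometry of the path rather than through an induction on blocks. If you want to pursue your plan, the missing step is a structure theorem for $\S_n(2413,4213)$ (say, via its left-to-right maxima) strong enough to carry six set-valued statistics through the recursion; as it stands, the proposal does not prove the theorem.
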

The details of constructing  $\Psi$, as well as its two interesting applications, is provided in~\cite{kl}. In the rest of this section, we will show two more restricted versions of Theorem~\ref{foata}.

\subsubsection{Two more equidistributions}
Based on calculations, Martinez and Savage~\cite{ms} suspected that 
$$
\sum_{e\in\I_n(021)}t^{\asc(e)}=\sum_{e\in\I_n(\geq,\neq,\geq)}t^{\asc(e)}=\sum_{e\in\I_n(>,-,\geq)}s^{\asc(e)}.
$$
This follows from Theorem~\ref{thm:sex}, the palindromicity of $S_n(t)$ and two more multivariate equidistributions (Theorems~\ref{equi:1} and~\ref{equi:2}) stated below. 

First we introduce a set-valued extension of ``$\dist$'' different from ``$\DIST$'': 
$$
\ROW(e):=\{e_1,e_2,\ldots,e_n\}\setminus\{0\},\ \text{for each $e\in\I_n$}.
$$
\begin{theorem}\label{equi:1}For $n\geq1$, we have 
$$
\sum_{e\in\I_n(\geq,\neq,\geq)}s^{\ROW(e)}t^{\ASC(e)}u^{\last(e)}=\sum_{e\in\I_n(>,-,\geq)}s^{\ROW(e)}t^{\ASC(e)}u^{\last(e)}.
$$
\end{theorem}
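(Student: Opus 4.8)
The plan is to establish the equidistribution by constructing an explicit bijection
$$
\phi:\I_n(\geq,\neq,\geq)\longrightarrow\I_n(>,-,\geq)
$$
that preserves the triple of statistics $(\ROW,\ASC,\last)$. Note first that both sides avoid the pattern $110$ in the sense that no $i<j<k$ has $e_i=e_j>e_k$ (this is contained in the $\geq\,\cdot\,\geq$ part of each relation triple), so the only difference between the two classes lies in the middle relation: $\I_n(\geq,\neq,\geq)$ forbids a weak rise followed by a strict-then-equal pattern with $e_j\neq e_k$, whereas $\I_n(>,-,\geq)$ forbids $e_i>e_j$ and $e_i\geq e_k$ for any $e_j,e_k$ in between. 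I would begin by giving a clean structural description of each class in terms of the ``record'' positions $\EMA(e)=\{i:e_i=i-1\}$ and the blocks between consecutive records; in both cases the entries strictly between two successive records are constrained to lie in a narrow range determined by the surrounding record values, and $\ROW$ just records which values actually occur.

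The key steps, in order: first, prove a \emph{block decomposition} for each class analogous to the one used in the proof of the Catalan equidistribution above — writing $t=\max\{i:e_i=i-1\}$ and splitting $e$ into a prefix in $\I_t$ of the same type and a shifted suffix in $\I_{n-1-t}$ of the same type — and check that $\ROW$, $\ASC$ and $\last$ all behave well (multiplicatively on $\ROW$ via union, additively on $\ASC$, and $\last$ reads off the suffix) under this decomposition. Second, translate the claimed identity into an identity of multivariate generating functions $F_{\geq,\neq,\geq}(s,t,u;x)$ and $F_{>,-,\geq}(s,t,u;x)$, each satisfying a functional equation coming from the block decomposition. Third, show the two functional equations coincide; since the block structures of the two classes differ only in how the non-record entries inside a block may repeat values, the bulk of the argument is a local bijection on single blocks showing that, for fixed multiset of ``exposed'' record values, the weighted count of admissible interiors agrees. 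Finally, assemble the local bijection into the global $\phi$ and verify the three statistics are preserved on the nose.

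I expect the main obstacle to be the local block analysis: a block of $\I_n(\geq,\neq,\geq)$ between records at positions $i<j$ (with $e_i=i-1$, $e_j=j-1$) allows interiors that are $110$-avoiding but can contain a value equal to the next entry only in restricted ways, while a block of $\I_n(>,-,\geq)$ forbids an entry to be strictly smaller than \emph{and} weakly exceeded by earlier entries — so matching up the two requires carefully tracking which values are ``forced'' versus ``free.'' The statistic $\ASC$ is sensitive to the exact arrangement within a block, so the local bijection cannot merely be a counting argument; it must be a concrete rearrangement of interior entries that preserves the number of ascents. I would try to make this bijection as in the proofs of Proposition~\ref{cat:tria} and Lemma~\ref{sch:tri} above: identify a distinguished entry in the block (the leftmost occurrence of the critical value, or the last entry equal to a record), delete or relocate it, recurse, and then reinsert on the other side; the verification that $\ROW$ is unchanged and that $\ASC$ is unchanged will be the routine-but-delicate part.
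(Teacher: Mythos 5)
Your opening structural claim is false, and it matters: $\I_n(>,-,\geq)$ does \emph{not} avoid the pattern $110$. Because the first relation in $(>,-,\geq)$ is strict, a configuration $e_i=e_j>e_k$ never triggers it; conversely, $(\geq,\neq,\geq)$ does forbid $110$ but, because its middle relation is $\neq$, it does \emph{not} forbid $100$. The correct picture --- which is essentially the whole content of the theorem --- is that $\I_n(\geq,\neq,\geq)=\I_n(110,101,201,210)$ while $\I_n(>,-,\geq)=\I_n(100,101,201,210)$: the two classes share three forbidden patterns and differ only in that one excludes $110$ where the other excludes $100$. The paper's proof (also only a sketch) exploits exactly this observation: it maps $\I_n(\geq,\neq,\geq)$ to $\I_n(>,-,\geq)$ by iteratively replacing occurrences of the pattern $100$ by occurrences of $110$, checking that this local substitution preserves $(\ROW,\ASC,\last)$. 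By asserting that both classes avoid $110$ you erase precisely the asymmetry the bijection has to repair, so the ``local block analysis'' you defer is not merely delicate --- it is aimed at the wrong target.

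Beyond that, there are two genuine gaps. First, your plan never produces the local bijection: you leave the block-interior matching as ``routine-but-delicate,'' but that matching \emph{is} the theorem; the record decomposition and the functional equations are bookkeeping around it, and it is not even established that the record decomposition of the Catalan proof is compatible with these two pattern classes. Second, $\ASC$ in the statement is set-valued ($t^{\ASC(e)}=\prod_{i\in\ASC(e)}t_i$), so any rearrangement of entries must preserve the \emph{positions} of the ascents; in your final paragraph you only require preserving the \emph{number} of ascents, which would prove a strictly weaker identity (likewise $\ROW$ is the set of positive values occurring, so the multiset of entries cannot be altered arbitrarily). To complete the argument along the paper's lines, you would instead analyze how an occurrence of $100$ (resp.\ $110$) can sit inside a sequence already avoiding $\{101,201,210\}$, and show that trading one pattern for the other is a well-defined, reversible operation that leaves the value set, the ascent set and the last entry unchanged.
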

\begin{proof}
We will construct a bijection  $\mathcal{R}:\I_n(\geq,\neq,\geq)\rightarrow\I_n(>,-,\geq)$, which preserves the triple statistics $(\ROW,\ASC,\last)$. Notice that $\I_n(\geq,\neq,\geq)=\I_n(\blue{110},101,201,210)$, while
$\I_n(>,-,\geq)=\I_n(\blue{100},101,201,210)$. The idea is to replace iteratively occurrences
of pattern $100$ in an inversion sequence in $\I_n(\geq,\neq,\geq)\setminus\I_n(>,-,\geq)$ with those of patterns $110$.

Our $\mathcal{R}$ when restricted to    $\I_n(\geq,\neq,\geq)\cap\I_n(>,-,\geq)$ is simply identity. So we only need to define the mapping $\mathcal{R}$ from $\I_n(\geq,\neq,\geq)\setminus\I_n(>,-,\geq)$ to $\I_n(>,-,\geq)\setminus\I_n(\geq,\neq,\geq)$. Let $e=(e_1,\ldots,e_n)\in\I_n(\geq,\neq,\geq)\setminus\I_n(>,-,\geq)$. Clearly,  $e$ must contain the pattern $100$. Find the (unique) greatest entry $e_i$ such that there exists $i<j<k$ and 
$e_i>e_j=e_k$.    It is routine to check that $e$ has the structure 
$$
e=(e_1,\ldots,e_{i-1}, \blue{e_i, e_{i+1},\ldots, e_{j'}},\red{e_{j'+1},\ldots},\blue{e_{k'}},\red{e_{k'+1},\ldots,e_n}),
$$
where $i<j'\leq k'\leq n$ and 
\begin{itemize}
\item $\max\{e_1,\ldots,e_{i-1}\}<e_i>e_{i+1}=\cdots=e_{j'}=e_{k'}$, 
\item $\min\{e_{j'+1},\ldots,e_{k'-1}, e_{k'+1},\ldots,e_n\}>e_i$.
\end{itemize}
Replace the entries $e_{i+1},\ldots,e_{j'}$ of $e$ by $(j'-i)$ copies of $e_i$ and keep other entries unchanged.  The resulting inversion sequence, that we denote $e'$, avoids all patterns inside $\{101,201,210\}$ but contains the patter $110$. If $e'$ avoids the pattern $100$, then define $\mathcal{R}(e)=e'$. Otherwise, repeat the same operation on $e'$ as what we have done on $e$ until we get an inversion sequences inside $\I_n(>,-,\geq)\setminus\I_n(\geq,\neq,\geq)$ which is defined to be $\mathcal{R}(e)$. For example, if $e=(0,1,0,\blue{2,0,0,}3,\blue{0},4)$, then we have the following two steps of replacements:
$$
e=(0,1,0,\blue{2,0,0,}3,\blue{0},4)\rightarrow(0,\blue{1,0},2,2,2,3,\blue{0},4)\rightarrow (0,1,1,2,2,2,3,0,4)=\mathcal{R}(e).
$$
Since each step of replacement is reversible, the mapping $\mathcal{R}$ is bijective. It is obvious that each step of replacement preserves the triple statistics $(\ROW,\ASC,\last)$, and so does $\mathcal{R}$, which completes the proof. 
\end{proof}

Recently, Baril and Vajnovszki~\cite{bv} constructed a new coding $b:\S_n\rightarrow\I_n$ satisfying 
$$
(\VID,\DES,\LMA,\LMI,\RMA)\pi=(\DIST,\ASC,\ZERO,\EMA,\RMI)b(\pi)
$$
for each $\pi\in\S_n$. Their coding can be applied to give an interpretation of $S_n(t)$ in terms of ascent polynomial on $(\geq,\neq,\geq)$-inversion sequences. For this purpose, we will review briefly the construction of $b$ next.  

An {\em integer interval} (or {\em interval} for short) $[m,n]$, $m<n$, is the set $\{x\in\N: m\leq x\leq n\}$. A {\em labelled interval} is a pair $(I,\ell)$, where $I$ is an interval and $\ell$ is an integer.  For a given permutation $\pi=\pi_1\cdots\pi_n\in\S_n$ and an integer $i$, $0\leq i<n$, define the {\em$i$-th slice} of $\pi$, denoted $U_i(\pi)$, to be a sequence of labelled intervals  constructed recursively by the following process. Set $U_0(\pi)=([0,n],0)$. For $i\geq1$, if 
$$U_{i-1}(\pi)=(I_1,\ell_1),(I_2,\ell_2),\ldots,(I_k,\ell_k)$$ is the $(i-1)$-th slide of $\pi$ and $v$, $1\leq v\leq k$, is the index such that $\pi_{i}\in I_v$, then $U_{i}(\pi)$ is constructed according to the following four possible cases:  
\begin{itemize}
\item If $\min(I_v)<\pi_i=\max(I_v)$, then $U_{i}(\pi)$ equals
$$
(I_1,\ell_1),\ldots,(I_{v-1},\ell_{v-1}),\blue{(J,\ell_{v+1}),(I_{v+1},\ell_{v+2}),\ldots,(I_{k-1},\ell_k),(I_k,\ell_k+1)},
$$
where $J=[\min(I_v),\pi_i-1]$. 
\item If $\min(I_v)<\pi_i<\max(I_v)$, then $U_{i}(\pi)$ equals
$$
(I_1,\ell_1),\ldots,(I_{v-1},\ell_{v-1}),\blue{(H,\ell_{v}),(J,\ell_{v+1}),(I_{v+1},\ell_{v+2}),\ldots,(I_{k-1},\ell_k),(I_k,\ell_k+1)},
$$
where $H=[\pi_{i}+1,\max(I_v)]$ and $J=[\min(I_v),\pi_i-1]$. 
\item If $\min(I_v)=\pi_i<\max(I_v)$, then $U_{i}(\pi)$ equals
$$
(I_1,\ell_1),\ldots,(I_{v-1},\ell_{v-1}),\blue{(H,\ell_{v})},(I_{v+1},\ell_{v+1}),\ldots,(I_{k-1},\ell_{k-1}),\blue{(I_k,\ell_k+1)},
$$
where $H=[\pi_{i}+1,\max(I_v)]$. 
\item If $\min(I_v)=\pi_i=\max(I_v)$, then $U_{i}(\pi)$ equals
$$
(I_1,\ell_1),\ldots,(I_{v-1},\ell_{v-1}),(I_{v+1},\ell_{v+1}),\ldots,(I_{k-1},\ell_{k-1}),\blue{(I_k,\ell_k+1)}.
$$
\end{itemize}
Now, let $b(\pi)=(b_1,b_2,\cdots, b_n)\in\I_n$, where for each $i$, $1\leq i\leq n$, $b_i=\ell_v$ if $v$ is such that $(I_v,\ell_v)$ is a labelled interval in the  $(i-1)$-th slice of $\pi$ with $\pi_i\in I_v$. 
\begin{example}
For $\pi=341652\in\S_6$, we compute 
\begin{align*}
&U_0(\pi)=([0,6],0);\\
&U_1(\pi)=([4,6],0),([0,2],1);\\
&U_2(\pi)=([5,6],0),([0,2],2);\\
&U_3(\pi)=([5,6],0),([2,2],2),([0,0],3);\\
&U_4(\pi)=([5,5],2),([2,2],3),([0,0],4);\\
&U_5(\pi)=([2,2],3),([0,0],5).
\end{align*}
Therefore, we get $b(\pi)=(0,0,2,0,2,3)$. 
\end{example}

We say that an interval $I$ is {\em lower} than another interval $J$ if $\max(I)<\min(J)$. 
From the construction of $b$, it is easily checked by induction that  the following properties hold. 
\begin{lemma}\label{lem:dec}
Let $\pi\in\S_n$  and $0\leq i<n$. If $U_{i}(\pi)=(I_1,\ell_1),(I_2,\ell_2),\ldots,(I_k,\ell_k)$, 
then the interval $I_1,I_2,\ldots,I_k$ are in decreasing order, while their labelings $\ell_1,\ell_2,\ldots,\ell_k$ are strictly increasing. Moreover, the labelings $\ell_1,\ell_2,\ldots,\ell_{k-1}$ must appear as entries of $b(\pi)$ after its $i$-th entry. 
\end{lemma}

\begin{theorem}\label{equi:2}The coding $b$ restricts to a bijection from $\S_n(3124,3142)$ to $\I_n(\geq,\neq,\geq)$. In particular,  
$$
\sum_{\pi\in\S_n(3142,3124)}s^{\VID(\pi)}t^{\DES(\pi)}=\sum_{e\in\I_n(\geq,\neq,\geq)}s^{\DIST(e)}t^{\ASC(e)}.
$$
\end{theorem}
\begin{proof}
Since $\S_n(3124,3142)$ and $\I_n(\geq,\neq,\geq)$ have the same cardinality (see Fig.~\ref{patt}), we only need to show that if $\pi\notin\S_n(3124,3142)$, then $b(\pi)\notin\I_n(\geq,\neq,\geq)$. 

Suppose $\pi=\pi_1\pi_2\cdots\pi_n$ is a permutation contains at least a pattern $3124$ or $3142$, then there exists $i,j,k,l$, $1\leq i<j<k<l\leq n$, such that $\pi_i\pi_j\pi_k\pi_l$ is order isomorphic  to $3124$ or $3142$.  Let $b(\pi)=(b_1,b_2,\cdots, b_n)\in\I_n$. As $\pi_i$ plays the role of $3$ in $\pi_i\pi_j\pi_k\pi_l$ and $U_i(\pi)$ is obtained from $U_{i-1}(\pi)$ by removing $\pi_i$, we have that $\max\{\pi_k,\pi_l\}$ lies in an interval different with and lower than the interval contains $\pi_j$ in $U_i(\pi)$. Also, $\pi_k$ and $\pi_l$ lie in different intervals of $U_i(\pi)$. Therefore, in view of Lemma~\ref{lem:dec}, in $U_j(\pi)$ the two intervals contain $\pi_k$ or $\pi_l$, which are different with the interval contains $0$,  have labelings  smaller or equals to $b_j$. Since these two labelings are not the labeling of the interval contains $0$, they will appear  in $b(\pi)$ after $b_j$, which together with $b_j$ form a $(\geq,\neq,\geq)$-pattern in $b(\pi)$. This completes the proof of the theorem. 
\end{proof}

\begin{remark}
Interestingly, we can  also  show that $b$ restricts to a bijection between $\S_n(2413,4213)$ and $\I_n(021)$. Due to cardinality reason, we only need to show that if $\pi\notin\S_n(2413,4213)$, then $b(\pi)\notin\I_n(021)$. To see this, suppose that $\pi_i\pi_j\pi_k\pi_l$, $1\leq i<j<k<l\leq n$, is a $2413$ or $4213$ pattern of $\pi=\pi_1\pi_2\cdots\pi_n$ and $b(\pi)=(b_1,b_2,\cdots, b_n)$. Since $\pi_i$ and $\pi_j$, which play the roles of $2$ and $4$ in $\pi_i\pi_j\pi_k\pi_l$, have been removed from intervals of $U_{j-1}(\pi)$, the two different intervals contain $\pi_k$ or $\pi_l$ have positive labelings in $U_{j-1}(\pi)$. So in $U_k(\pi)$, the labeling of the interval contains $\pi_l$ must positive and smaller than $b_k$. In view of Lemma~\ref{lem:dec},  this labeling must appear after $b_k$, which together with $b_k$ and $b_1$ form a $021$-pattern of $b(\pi)$. This shows the restricted mapping $b: \S_n(2413,4213)\rightarrow\I_n(021)$ is a bijection. 

Note that this restricted $b$ does not transform ``$\RMI$'' to ``$\EXPO$'', while  our bijection $\Psi$ in Theorem~\ref{thm:sex} does. 
\end{remark}

\section{Baxter numbers}
\label{sec:bax}
 A permutation avoiding both {\em vincular patterns} (see~\cite{ki} for the definition) $2\underline{41}3$ and $3\underline{14}2$ is called a {\em Baxter permutation}. It is a result of Chung et al.~\cite{chung} that
$$
B_n=|\S_n(2\underline{41}3,3\underline{14}2)|=\frac{1}{{n+1\choose1}{n+1\choose2}}\sum_{k=0}^{n-1}{n+1\choose k}{n+1\choose k+1}{n+1\choose k+2}.
$$
The number $B_n$ is known as the $n$-th {\em Baxter number}. Martinez and Savage~\cite{ms} conjectured that $|\I_n(\geq,\geq,>)|=B_n$, which can be refined as follows.

\begin{theorem}\label{thm:baxter}
For $n\geq1$, we have the equidistribution
\begin{equation}\label{bax:equi}
\sum_{e\in\I_n(\geq,\geq,>)}u^{n+1-\cri(e)}=\sum_{\pi\in\S_n(2\underline{41}3,3\underline{14}2)}u^{\lma(\pi)+\rma(\pi)}.
\end{equation}
\end{theorem}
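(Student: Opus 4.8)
The plan is to attack both sides of~\eqref{bax:equi} by the \emph{obstinate kernel method}, following Bousquet-M\'elou's treatment of Baxter-type functional equations. The right-hand side is the more classical object: it is known (see Chung et al.~\cite{chung} and the subsequent literature on the generating tree / kernel approach to Baxter permutations) that the distribution of the pair $(\lma,\rma)$ on $\S_n(2\underline{41}3,3\underline{14}2)$ is governed by a two-catalytic-variable functional equation whose kernel is exactly of the obstinate type, and whose solution extracts the symmetric-function form of $B_n$. So the real work is to set up the \emph{left-hand side} as a generating function satisfying the \emph{same} functional equation, and then invoke the already-known solution.

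First I would encode $\I_n(\geq,\geq,>)$ by a recursive construction that appends a new last entry. Here the parameter $\cri$ from Section~\ref{sec:cat} is the crucial device: for $e\in\I_n(\geq,\geq,>)$ the legal values of a new entry $e_{n+1}$ are exactly $\cri(e)\le e_{n+1}\le n$, so the ``growth'' of an inversion sequence is controlled by a single statistic, and one needs to track how $\cri$ itself updates. The key combinatorial lemma will be a local rule: given $e$ with $\cri(e)=c$ and $\last(e)=\ell$, express $\cri$ of the extended sequence $(e_1,\dots,e_n,k)$ for each admissible $k$ in terms of $c,\ell,k$ and $n$. I expect this to require a second catalytic variable — tracking $\last(e)$ alongside $n+1-\cri(e)$ — precisely because the update of $\cri$ depends on the value of the last entry (the relations $\geq,\geq,>$ make the third, strict, comparison sensitive to whether the new entry repeats an old value). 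Writing $F(x;u,v)=\sum_n x^n\sum_{e\in\I_n(\geq,\geq,>)} u^{n+1-\cri(e)} v^{\last(e)}$, the recursive construction should yield a functional equation of the shape $K(x;u,v)\,F(x;u,v)=(\text{explicit polynomial}) + (\text{specializations } F(x;u,1),\,F(x;1,v),\dots)$, where $K$ is the kernel.

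The obstinate kernel method then proceeds as usual: the equation $K(x;u,v)=0$ has a small branch $v=V(x,u)$ (power series in $x$); more importantly, $K$ is invariant under a group of birational transformations in $(u,v)$ — typically generated by $(u,v)\mapsto(\bar u,v)$ and $(u,v)\mapsto(u,\bar v)$ with $\bar u,\bar v$ read off from $K$ — and iterating this group on the kernel equation produces a finite orbit (of size $8$ or $12$ in the Baxter case). Taking an alternating sum over the orbit cancels the unknown one-variable sections and leaves $F$ expressed via a constant-term / positive-part extraction. Carrying out that extraction — this is the step that is routine in principle but computational — gives a closed form; comparing it with the known closed form for $\sum_\pi u^{\lma(\pi)+\rma(\pi)}$ on Baxter permutations (which arises from an isomorphic kernel) establishes~\eqref{bax:equi}, and specializing $u=1$ recovers $|\I_n(\geq,\geq,>)|=B_n$, settling the Martinez--Savage conjecture.

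\textbf{Main obstacle.} The hard part will be the very first combinatorial step: finding the correct pair of catalytic statistics on $\I_n(\geq,\geq,>)$ and proving a clean update rule for them, so that the resulting kernel $K(x;u,v)$ genuinely coincides (up to a harmless change of variables) with the Baxter kernel of Chung--Bousquet-M\'elou. If the naive choice $(n+1-\cri,\last)$ does not close up, one may need a slightly different second statistic (e.g.\ some function of the longest weakly increasing run of repeated values, or of the position of the last ``record''), and identifying it is where the real insight lies; once the equation is in obstinate form, the kernel-group computation and the comparison with the known Baxter generating function are mechanical.
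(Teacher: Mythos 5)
Your overall strategy is exactly the one the paper follows: encode the growth of $\I_n(\geq,\geq,>)$ by a succession rule with two catalytic parameters, translate it into a functional equation, solve it by Bousquet-M\'elou's obstinate kernel method (the paper substitutes the pairs $(x,Y),(\bar{x}Y,Y),(\bar{x}Y,\bar{x})$ and their duals into the kernel form of the equation, eliminates the unwanted sections $\widetilde{F}(Y)$ and $\widetilde{F}(Y\bar{x})$, and extracts the positive part in $x$), and then match the resulting expression for $\widetilde{F}(x)+\widetilde{F}(\bar{x})$ against Bousquet-M\'elou's formula for $\widetilde{G}(x)+\widetilde{G}(\bar{x})$ coming from $(\lma,\rma)$ on Baxter permutations. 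So there is no methodological divergence to report.

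The genuine gap is precisely the step you yourself flag as the ``main obstacle'': your proposal never produces a pair of catalytic statistics for which the succession rule closes, and the candidate you put forward, $\bigl(n+1-\cri(e),\last(e)\bigr)$, is not the one that works. The paper's choice is $(p,q)=\bigl(m+1-\cri(e),\,n-m\bigr)$ where $m=\max\{e_1,\dots,e_n\}$; note that $p+q=n+1-\cri(e)$, so the statistic appearing in the theorem is recovered as $p+q$ and it is the diagonal $F(u,u)$ that must be compared with $G(u,u)$. With this choice one obtains the clean rewriting rule of Lemma~\ref{lem:baxeter}: a sequence with parameters $(p,q)$ has exactly $p+q$ children, with parameters $(p-1,q+1),\dots,(1,q+1),(1,q+1),(p+1,q),(p+2,q-1),\dots,(p+q,1)$, indexed by the value $k$ of the appended entry as $k$ runs from $\cri(e)$ to $n$. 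Establishing this rule --- a case analysis on whether the new entry falls below, equals, or exceeds the old maximum, and how $\cri$ updates in each case --- is the combinatorial heart of the proof and cannot be waved at: without it there is no functional equation to feed into the kernel machinery, and an incorrectly chosen second statistic would produce an equation whose kernel need not match the Baxter one. Everything after that point in your outline agrees with the paper and is, as you say, essentially mechanical.
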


In view of Theorem~\ref{thm:baxter},  the $(\geq,\geq,>)$-avoiding inversion sequences will be named {\em Baxter inversion sequences}, which are the only pattern avoiding inversion sequences known to be counted by Baxter numbers. For $0\leq k\leq n-1$, define the {\em Baxter triangle} as 
$$
B_{n,k}:=|\{e\in\I_n(\geq,\geq,>): \last(e)=k\}|.
$$
The first values of the {\em Baxter triangle} $B_{n,k}$ are:
\begin{eqnarray*}
\begin{array}{ccccccccccc}
   1  &      &      &      &      &       &     \\
   1& 1    &      &      &      &       &      \\
   2  & 2    & 2    &      &      &       &      \\
  4  & 6   & 6    &  6   &      &       &      \\
  8  & 18   & 22   &  22   &  22   &       &     \\
  16 & 50  & 80  &  92  &  92  &  92    &     \\
  32 & 130  & 268  &  378 &  422  &  422   & 422.
\end{array}
\end{eqnarray*}
Note that the second column appears as sequence \href{https://oeis.org/A048495}{OEIS: A048495}.

\begin{corollary}
 For $0\leq k\leq n-1$, we have
$$
B_{n,k}=|\{\pi\in\S_{n-1}(2\underline{41}3,3\underline{14}2): \lma(\pi)+\rma(\pi)\geq n-k\}|.
$$
\end{corollary}
%

The rest of this section is devoted to a proof of Theorem~\ref{thm:baxter}.
For each $e\in\I_n(\geq,\geq,>)$, introduce the {\em parameters} $(p,q)$ of $e$, where 
$$p=\max(e)+1-\cri(e) \quad{ and } \quad q=n-\max(e)
$$ with $\max(e):=\max\{e_1,\ldots,e_n\}$. 
For example, if $e=(0,1,0,2,2,4)\in\I_6(\geq,\geq,>)$, then $\max(e)=4$ and $\cri(e)=2$, and so  the parameters of $e$  is $(3,2)$.
After a careful discussion we can obtain the following new rewriting rule.

\begin{lemma}\label{lem:baxter}
Let $e\in\I_n(\geq,\geq,>)$ be a Baxter inversion sequence with parameters $(p,q)$. Exactly $p+q$ Baxter inversion sequences in $\I_{n+1}(\geq,\geq,>)$ when removing their last entries will become $e$, and their  parameters are respectively:
\begin{align*}
&(p-1,q+1), (p-2,q+1), \ldots, (1,q+1), \\
&(1,q+1), (p+1,q), (p+2,q-1),\ldots, (p+q,1).
\end{align*}
The order in which the parameters are listed corresponds to the inversion sequences with last entries from $c$ to $n$, where $c=n+1-(p+q)$.
\end{lemma}

\begin{proof}
It is clear from the definition of  critical value of $e$ that $f=(e_1,\ldots,e_n,b)$ is a Baxter inversion sequence if and only if $\cri(e)\leq b\leq n$. We distinguish three cases:
\begin{itemize}
\item If $\cri(e)\leq b< \max(e)$, then $\cri(f)=b+1$ and $\max(f)=\max(e)$. These Baxter inversion sequences contribute the paramaters $(p-1,q+1), (p-2,q+1), \ldots, (1,q+1)$.
\item If $b=\max(e)$, then $\cri(f)=\max(f)=\max(e)$. This Baxter inversion sequence contributes the paramater $(1,q+1)$.
\item If $\max(e)< b\leq n$, then $\cri(f)=\cri(e)$ and $\max(f)=b$. These Baxter inversion sequences contribute the paramaters $(p+1,q), (p+2,q-1),\ldots, (p+q,1)$.
\end{itemize}
Summing over all the above cases give the desired rewriting rule for Baxter inversion sequences. 
\end{proof}

According to the above rewriting rule, we can construct a {\em generating tree} (actually an infinite rooted tree) for Baxter inversion sequences by representing each element as its parameters like this: the root is $(1,1)$ and the children of a vertex labelled $(p,q)$ are those that generated according to the  rewriting rule in Lemma~\ref{lem:baxter}. See Fig.~\ref{tree:bax} for the first few levels of this generating tree. Note that the number of vertices in the $n$-th level of this tree is the cardinality of $\I_n(\geq,\geq,>)$.

\begin{figure}
\setlength{\unitlength}{1mm}
\begin{picture}(106,32)\setlength{\unitlength}{1mm}
\thinlines
\put(50,27){$(1,1)$}
\put(54,25.5){\line(-5,-1){25}}\put(55,25.5){\line(5,-1){25}}
\put(24,17){\small{$(1,2)$}}\put(76,17){\small{$(2,1)$}}

\put(27.5,15.5){\line(-5,-1){25}}\put(28.5,15.5){\line(-1,-2){2.5}}\put(29.5,15.5){\line(4,-1){20}}
\put(-1.5,7){\footnotesize{$(1,3)$}}\put(22.5,7){\footnotesize{$(2,2)$}}\put(46,7){\footnotesize{$(3,1)$}}

\put(1.5,5.5){\line(-3,-1){15}}
\put(1.5,5.5){\line(-1,-1){5}}
\put(1.5,5.5){\line(0,-1){5}}
\put(1.5,5.5){\line(2,-1){10}}

\put(26,5.5){\line(-2,-1){10}}
\put(26,5.5){\line(-1,-1){5}}
\put(26,5.5){\line(1,-1){5}}
\put(26,5.5){\line(2,-1){10}}

\put(49,5.5){\line(-2,-1){10}}
\put(49,5.5){\line(-1,-2){2.5}}
\put(49,5.5){\line(1,-1){5}}
\put(49,5.5){\line(2,-1){10}}

\put(80,15.5){\line(-2,-1){10}}\put(81,15.5){\line(1,-1){5}}\put(82,15.5){\line(5,-1){25}}
\put(66,7){\footnotesize{$(1,2)$}}\put(83,7){\footnotesize{$(1,2)$}}\put(104,7){\footnotesize{$(3,1)$}}

\put(69.5,5.5){\line(-1,-1){5}}
\put(69.5,5.5){\line(0,-1){5}}
\put(69.5,5.5){\line(1,-1){5}}

\put(86.5,5.5){\line(-1,-1){5}}
\put(86.5,5.5){\line(0,-1){5}}
\put(86.5,5.5){\line(1,-1){5}}

\put(108,5.5){\line(-2,-1){10}}
\put(108,5.5){\line(-1,-2){2.5}}
\put(108,5.5){\line(1,-1){5}}
\put(108,5.5){\line(2,-1){10}}

\end{picture}
\caption{The generating tree for Baxter inversion sequences.}
\label{tree:bax}
\end{figure}

Define the formal power series $F(t;u,v)=F(u,v):=\sum_{p,q\geq1}F_{p,q}(t)u^pv^q$, where $F_{p,q}(t)$ is the size generating function for Baxter inversion sequences with parameters $(p,q)$. We can turn the above lemma into a functional equation as follows. 
\begin{proposition}We have the following equation for $F(u,v)$:
\begin{equation}\label{eq:baxter}
\biggl(1+\frac{tv}{1-u}+\frac{tv}{1-v/u}\biggr)F(u,v)=tuv+tuv\biggl(1+\frac{1}{1-u}\biggr)F(1,v)+\frac{tv}{1-v/u}F(u,u).
\end{equation}
\end{proposition}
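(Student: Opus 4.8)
The plan is to translate Lemma~\ref{lem:baxeter} into a generating function identity by summing the contributions of each inversion sequence to its $p+q$ children. First I would fix $e\in\I_n(\geq,\geq,>)$ with parameters $(p,q)$, contributing the monomial $t^nu^pv^q$ to $F(u,v)$, and write down the monomial contributed by each of its children in $\I_{n+1}(\geq,\geq,>)$; by the lemma these children have parameters $(p-1,q+1),\dots,(1,q+1)$ together with a second copy of $(1,q+1)$, and $(p+1,q),(p+2,q-1),\dots,(p+q,1)$. So the total contribution of $e$'s children is
$$
t^{n+1}\Biggl(v^{q+1}\sum_{i=1}^{p-1}u^i \;+\; uv^{q+1}\;+\;\sum_{j=1}^{q}u^{p+j}v^{q+1-j}\Biggr).
$$
Summing this over all $e$ (i.e. over $n,p,q\ge 1$) must reproduce $F(u,v)$ minus the length-one term, which is the single inversion sequence $(0)$ with parameters $(p,q)=(1,1)$ contributing $tuv$. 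That accounts for the isolated $tuv$ on the right-hand side of~\eqref{eq:baxter}.

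The main bookkeeping step is to evaluate the three sums as rational functions in $u,v$ and rewrite them in terms of $F(u,v)$, $F(1,v)$ and $F(u,u)$. For the first sum, $\sum_{i=1}^{p-1}u^i = \frac{u-u^p}{1-u}$, so $\sum_{e} t^{n+1}v^{q+1}\frac{u-u^p}{1-u}$ contributes $\frac{tv}{1-u}\bigl(uv\cdot\frac{F(1,v)}{v}\text{-type term} - F(u,v)\bigr)$ — more precisely, pulling $t v/(1-u)$ out front, the $u^1$ part sums (over $p$) to $u\cdot(\text{something involving }F(1,v))$ and the $u^p$ part sums to $F(u,v)$ after shifting the $t^n\to t^{n+1}$, $v^q\to v^{q+1}$ indices. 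The extra diagonal copy $uv^{q+1}$ sums to $tuv\cdot\frac{1}{v}F(1,v)$-flavoured term, i.e. $tuv$ times a specialization at $u=1$. The third sum is a finite geometric-type sum: $\sum_{j=1}^{q}u^{p+j}v^{q+1-j} = u^pv\sum_{j=1}^q (u/v)^j v^{q+1-j}/v^{\dots}$; factoring carefully, $\sum_{j=1}^{q}u^{p+j}v^{q+1-j} = \frac{v}{1-v/u}\bigl(u^{p+1}v^{q}\cdot(\text{stuff}) - u^{\,p+q+1}\bigr)$, and the $u^{p+q+1}$ term, upon summing over $e$, becomes the diagonal specialization $F(u,u)$ (since $u^{p+q}=u^{p+q}$ is what you get by setting $v=u$ in $u^pv^q$), yielding the $\frac{tv}{1-v/u}F(u,u)$ term; the remaining part combines with $F(u,v)$ to give the $\frac{tv}{1-v/u}F(u,v)$ contribution on the left. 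Collecting the $F(u,v)$ coefficients from the first and third sums together with the bare $F(u,v)$ (coming from the identity ``children of everything $=$ everything but $(0)$'') produces the bracket $\bigl(1+\frac{tv}{1-u}+\frac{tv}{1-v/u}\bigr)$ on the left.

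I expect the main obstacle to be getting the index shifts and the boundary terms of the two geometric sums exactly right, since a single off-by-one in the exponents of $u$ or $v$ will spoil the clean appearance of $F(1,v)$ and $F(u,u)$; in particular one must be careful that the ``second copy of $(1,q+1)$'' in the lemma is tracked as a genuinely separate summand (it is the source of the $\frac{1}{1-u}$ inside the parenthesis multiplying $F(1,v)$, as opposed to a plain $1$) and that the geometric sum $\sum_{j=1}^q u^{p+j}v^{q+1-j}$ is resummed with respect to the correct variable so that its ``tail'' is a function of $u$ alone and hence specializes $F$ on the diagonal $v=u$. Once these sums are in closed form, multiplying through by the common denominators $(1-u)$ and $(1-v/u)$ and matching coefficients of $t^nu^pv^q$ against the recurrence of Lemma~\ref{lem:baxeter} verifies~\eqref{eq:baxter} term by term; I would present the derivation as this coefficient-matching so that the reader can check the finitely many cases ($p=1$ versus $p\ge 2$, $q=1$ versus $q\ge 2$) rather than manipulating the infinite sums formally.
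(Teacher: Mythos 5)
Your derivation is correct and is exactly the translation of Lemma~\ref{lem:baxeter} into generating-function form that the paper intends (the paper omits the computation entirely): summing the three blocks gives $\frac{tv}{1-u}\bigl(uF(1,v)-F(u,v)\bigr)$, $tuv\,F(1,v)$ and $\frac{tv}{1-v/u}\bigl(F(u,u)-F(u,v)\bigr)$, plus the initial term $tuv$ from $e=(0)$, which assemble into~\eqref{eq:baxter}. One small slip in your closing parenthetical: the duplicated parameter $(1,q+1)$ is the source of the plain $1$ in the factor $\bigl(1+\frac{1}{1-u}\bigr)$ multiplying $F(1,v)$, while the block $(p-1,q+1),\dots,(1,q+1)$ is what sources the $\frac{1}{1-u}$ --- your main computation has this the right way around, but the remark reverses the attribution.
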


\begin{proof}
In the generating tree for Baxter inversion sequences, each vertex other than the root $(1,1)$ can be generated by an unique parent. Thus, we have 
\begin{align*}
F(u,v)&=tuv+t\sum_{p,q\geq1}F_{p,q}(t)\biggl(v^{q+1}\sum_{i=1}^{p-1}u^i+uv^{q+1}+\sum_{i=0}^{q-1}u^{p+1+i}v^{q-i}\biggr)\\
&=tuv+t\sum_{p,q\geq1}F_{p,q}(t)\biggl(\frac{u-u^p}{1-u}v^{q+1}+uv^{q+1}+\frac{u^{p+q}v-u^{p}v^{q+1}}{1-v/u}\biggr)\\
&=tuv+tuv\biggl(1+\frac{1}{1-u}\biggr)F(1,v)-\frac{tv}{1-u}F(u,v)+\frac{tv}{1-v/u}(F(u,u)-F(u,v)),
\end{align*}
which is equivalent to~\eqref{eq:baxter}.
\end{proof}

Let 
$G(u,v):=\sum_{n\geq1}t^n\sum_{\pi\in\S_n(2\underline{41}3,3\underline{14}2)}u^{\lma(\pi)}v^{\rma(\pi)}$.
This formal power series $G(u,v)$ was first introduced and studied by Bousquet-M\'elou~\cite{bo}.  
Now, Theorem~\ref{thm:baxter} is equivalent to $G(u,u)=F(u,u)$, which will be established by solving~\eqref{eq:baxter}.

\begin{proof}[Proof of Theorem~\ref{thm:baxter}]
It will be convenient to set $w=v/u$ in~\eqref{eq:baxter}. The equation then becomes
$$
\biggl(1+\frac{tuw}{1-u}+\frac{tuw}{1-w}\biggr)F(u,wu)=tu^2w+tu^2w\biggl(1+\frac{1}{1-u}\biggr)F(1,wu)+\frac{tuw}{1-w}F(u,u).
$$
Further setting $u=1+x$ and $w=1+y$ in the above equation yields
\begin{multline}\label{eq:bax}
\frac{xy-t(1+x)(1+y)(x+y)}{t(1+x)(1+y)}F(1+x,(1+x)(1+y))\\
=xy(1+x)-(1-x^2)yF(1,(1+x)(1+y))-\widetilde{F}(x),
\end{multline}
where $\widetilde{F}(x):=xF(1+x,1+x)$. We call the numerator $K(x,y)$ of the coefficient of
$F(1+x,(1+x)(1+y))$ the {\em kernel} of the above equation:
$$
K(x,y)=xy-t(1+x)(1+y)(x+y).
$$
We are going to apply the so-called {\em kernel method} (cf.~\cite{bo}) to this equation. 

As a polynomial in $y$, the kernel has two roots:
$$
Y(x)=\frac{1-t(1+x)(1+\bar{x})-\sqrt{1-2t(1+x)(1+\bar{x})-t^2(1-x^2)(1-\bar{x}^2)}}{2t(1+\bar{x})},
$$
$$
Y'(x)=\frac{1-t(1+x)(1+\bar{x})+\sqrt{1-2t(1+x)(1+\bar{x})-t^2(1-x^2)(1-\bar{x}^2)}}{2t(1+\bar{x})},
$$
where $\bar{x}=1/x$. Only the first root can be substituted for $y$ in~\eqref{eq:bax}, because the term
$F(1+x,(1+x)(1+Y'))$ is not a well-defined power series in $t$ (the taylor expansion of $Y'$ in $t$
does not exist). 

Now, we will adopt the {\em obstinate kernel method} that was invented by Bousquet-M\'elou~\cite[Section~2.2]{bo} for producing all the pairs $(x,y)$ that can be legally substituted in~\eqref{eq:bax}: those are the pairs $(x,Y), (\bar{x}Y,Y), (\bar{x}Y,\bar{x})$ and their dual $(Y,x), (Y,\bar{x}Y), (\bar{x},\bar{x}Y)$, thanks to the symmetry of the kernel $K(x,y)$. 
Substituting the pairs $(x,Y)$ and $(Y,x)$ for $(x,y)$ in~\eqref{eq:bax} yields 
$$
\begin{cases}
\,\,xY(1+x)-(1-x^2)YF(1,(1+x)(1+Y))-\widetilde{F}(x)=0,
\\
\,\,xY(1+Y)-(1-Y^2)xF(1,(1+x)(1+Y))-\widetilde{F}(Y)=0.
\end{cases}
$$
Eliminating $F(1,(1+x)(1+Y))$ we get 
\begin{equation}\label{eq:1}
(x-xY^2)\widetilde{F}(x)-(Y-x^2Y)\widetilde{F}(Y)=(Y-Y^3)(x^2+x^3)-(x-x^3)(Y^2+Y^3).
\end{equation}
Similarly, substitute $(\bar{x}Y,Y),(Y,\bar{x}Y)$ and $(\bar{x}Y,\bar{x}),(\bar{x},\bar{x}Y)$ into~\eqref{eq:bax} and after some computation we get two equations, which together with~\eqref{eq:1} give the system of equations:
\begin{equation*}
\begin{cases}
\,\,(x-xY^2)\widetilde{F}(x)-(Y-x^2Y)\widetilde{F}(Y)=(Y-Y^3)(x^2+x^3)-(x-x^3)(Y^2+Y^3),
\\
\,\,(Y\bar{x}-Y^3\bar{x})\widetilde{F}(Y\bar{x})-(Y-Y^3\bar{x}^2)\widetilde{F}(Y)=(Y-Y^3)(Y^2\bar{x}^2+Y^3\bar{x}^3)-(Y\bar{x}-Y^3\bar{x}^3)(Y^2+Y^3),
\\
\,\,(Y\bar{x}-Y\bar{x}^3)\widetilde{F}(Y\bar{x})-(\bar{x}-Y^2\bar{x}^3)\widetilde{F}(\bar{x})=(\bar{x}-\bar{x}^3)(Y^2\bar{x}^2+Y^3\bar{x}^3)-(Y\bar{x}-Y^3\bar{x}^3)(\bar{x}^2+\bar{x}^3).
\end{cases}
\end{equation*}
By eliminating $\widetilde{F}(Y)$ and $\widetilde{F}(Y\bar{x})$, we get a relation between $\widetilde{F}(x)$ and $\widetilde{F}(\bar{x})$:
\begin{equation}\label{main:baxe}
\widetilde{F}(x)+\widetilde{F}(\bar{x})=\frac{Y(1+x)(x^4-2Yx^3+2Y^2x-2Y+1)}{x^2(Y-1)(Y-x)}.
\end{equation}
But $\widetilde{F}(x)=xF(1+x,1+x)$ is a formal power series in $t$ with coefficients in $x\N[x]$, while $\widetilde{F}(\bar{x})$ is a formal power series in $t$ with coefficients in $\bar{x}\N[\bar{x}]$. Therefore, the positive part in $x$ of the right hand side of~\eqref{main:baxe} is exactly $\widetilde{F}(x)$.

On the other hand, it has been shown in~\cite[Corollary~3]{bo} that if we let
$\widetilde{G}(x):=xG(1+x,1+x)$, then
\begin{equation}
\frac{x-2t(1+x)^2}{t(1+x)^2}\widetilde{G}(x)=x^2-2R(x),
\end{equation}
where $R(x)=xG(1+x,1)$.
Combining with the relation between $R(x)$ and $R(\bar{x})$ proved in~\cite[Eq.~(8)]{bo}:
$$
R(x)+R(\bar{x})=\bar{x}^2Y(1+x^3-xY),
$$
we have 
\begin{align*}\label{eq:bousquet}
\widetilde{G}(x)+\widetilde{G}(\bar{x})&=\frac{t(1+x)^2}{x-2t(1+x)^2}(x^2+\bar{x}^2-2(R(x)+R(\bar{x})))\\
&=\frac{t(1+x)^2}{x-2t(1+x)^2}(x^2+\bar{x}^2-2\bar{x}^2Y(1+x^3-xY)).
\end{align*}
To check that $\frac{t(1+x)^2}{x-2t(1+x)^2}(x^2+\bar{x}^2-2\bar{x}^2Y(1+x^3-xY))$ equals the right hand side
of~\eqref{main:baxe} is routine by Maple, which proves that $\widetilde{F}(x)=\widetilde{G}(x)$.
This completes the proof of the theorem. 
\end{proof}

Since the proof of equdistribution~\eqref{bax:equi} uses the obstinate kernel method based on the formal
power series heavily, it is natural to ask for  a bijective proof.  

\section{Euler numbers}\label{sec:euler}

The {\em Euler numbers} $E_n$ can be defined by the taylor expansion of $\tan(x)+\sec(x)$:
$$
\sum_{n\geq0} E_n\frac{x^n}{n!}=\tan(x)+\sec(x)=1+x+1\frac{x^2}{2!}+2\frac{x^3}{3!}+5\frac{x^4}{4!}+16\frac{x^5}{5!}+61\frac{x^6}{6!}+\cdots.
$$
The fundamental combinatorial interpretation of $E_n$ is due to Andr\'e~\cite{andr}, who showed that $E_n$ enumerates  permutations $\pi_1\pi_2\cdots\pi_n\in\S_n$ having the {\em down-up property}
$$
\pi_1>\pi_2<\pi_3>\pi_4<\cdots.
$$
There are several other families known to be counted by Euler numbers, including Simsum permutations and $0$-$1$-$2$-increasing trees. 

For  $\pi\in\S_n$, an index $i\geq2$ is called a {\em double descents} of $\pi$  if $\pi_{i-1}>\pi_{i}>\pi_{i+1}$.
As introduced by Simion and Sundaram~\cite{sun}, a permutation in $\S_n$ is called a
{\em Simsun permutation} if it has no double descents, even after removing $n,n-1,\ldots,k$ for any $k$. Note that Simsun permutations are slight variants of the {\em Andr\'e permutations} of Foata and Sch\"uzenberger~\cite{fo2} (see also~\cite{hei}), which were invented to interpret the  {\em$cd$-index} of symmetry groups. 
Let $RS_n$ be the set of all Simsun permutations in $\S_n$. 
Corteel et al.~\cite[Corollary~2]{cor} showed
\begin{equation}\label{simsun1}
\sum_{\pi\in RS_n} t^{\asc(\pi)}=\sum_{e\in\I_n(000)}t^{\dist(e)}
\end{equation}
via recurrence relations and posed the question of finding a natural bijection for this result. 
In this section, we will prove bijectively two different refinements of~\eqref{simsun1}.

\subsection{The Entringer--Eulerian statistics on $\I_n(000)$}
Using the statistic ``$\last$'',
we first refine~\eqref{simsun1} with a bijective proof. 
\begin{theorem}\label{euler:asc}
There exists a bijection $\Omega:\I_n(000)\rightarrow RS_n$ such that 
$$
(\dist,\last+1)(e)=(\asc,\last)\Omega(e)
$$
for each $e\in\I_n(000)$. 
Consequently, 
$$
\sum_{\pi\in RS_n} t^{\asc(\pi)}u^{\last(\pi)}=\sum_{e\in\I_n(000)}t^{\dist(e)}u^{\last(e)+1}.
$$
\end{theorem}

The bijection $\Omega$ is the combination of  the simple bijection in~\cite[Theorem~7]{cor} from $\I_n(000)$ to {\em$0$-$1$-$2$-increasing trees} with $n+1$ vertices and a special ordering of the {\em increasing tree representation} of permutations due to Maria Monks (see~\cite[Page~198]{st}).

\begin{figure}

\centering
\begin{tikzpicture}

\draw (-5,2)node[]{
\begin{tikzpicture}[every node/.style={circle,draw},level distance=1.2cm, sibling distance=2.2cm] 
\tikzstyle{level 3}=[level distance=1.2cm, sibling distance=1.2cm]
  \node {0} 
    child {node {1}
    	child {node {2}
		child {node {3}}
		child {node {4}}}
	child {node{5}
		child {node{8}}}}
    child {node {6}
      child {node {7} }}; 
\end{tikzpicture}};
\draw (-1.8,2)node{$\rightarrow$};
\draw (-1.8,2.5)node{$\mathcal{M}$};
\draw(-5,-1) node{$\uparrow$};
\draw(-5.4,-1) node{$\mathcal{C}$};
\draw(-5,-2) node{$(0,1,2,2,1,0,6,5)\in\I_8(000)$};
\draw(2,-1) node{$\downarrow$};
\draw(2.5,-1) node{$\mathcal{T}^{-1}$};
\draw(2,-2) node{$\blue{78153426}\in RS_8$};
\draw (2,1.6)node[]{
\begin{picture}(150,30)\setlength{\unitlength}{1mm}
\thinlines
\put(10,23){\circle*{2}}\put(12,23){$0(\blue{1})$}
\put(10,23){\line(-1,-1){10}}
\put(0,13){\circle*{2}}\put(2,12){$6(\blue{7})$}
\put(0,13){\line(2,-3){6}}
\put(6.5,3.5){\circle*{2}}\put(4.5,-1){$7(\blue{8})$}

\put(10,23){\line(2,-1){20}}
\put(30,13){\circle*{2}}\put(31,14){$1(\blue{2})$}
\put(30,13){\line(1,-1){10}}
\put(40,3){\circle*{2}}\put(41,4){$5(\blue{6})$}
\put(40,3){\line(1,-1){10}}
\put(50,-7){\circle*{2}}\put(51,-6){$8$(\blue{remove})}
\put(44,-3){\red{$\parallel$}}

\put(30,13){\line(-1,-1){10}}
\put(20,3){\circle*{2}}\put(21.5,1.5){$2(\blue{3})$}
\put(20,3){\line(1,-2){6}}
\put(26,-9){\circle*{2}}\put(27,-8){$3(\blue{4})$}
\put(20,3){\line(-1,-2){6}}
\put(14,-9){\circle*{2}}\put(15.5,-9){$4(\blue{5})$}
\end{picture}
};
\end{tikzpicture}

\caption{An example of the bijection $\Omega=\mathcal{T}^{-1}\circ \mathcal{M}\circ \mathcal{C}:\I_n(000)\rightarrow RS_n$.}
\label{KL:tree}
\end{figure}

It is convenient to introduce some necessary definitions about trees. 
A rooted tree with vertices labeled by a set of distinct integers is called an {\em increasing tree} if the labels of the vertices are increasing along any path from the root to a leaf.
A {\em binary increasing tree} of order $n$ is an ordered increasing tree with vertex set $\{1,2,\ldots,n\}$ in which every vertex has at most two children. In a binary increasing tree, we distinguish each child of a vertex  by left or right.  A {\em$0$-$1$-$2$-increasing tree} of order $n+1$ is an unordered  increasing tree on the vertices $\{0,1,2,\ldots,n\}$ such that every vertex has zero, one or two children. We do not consider the positions of the children of each vertex in a $0$-$1$-$2$-increasing tree. 
See Fig.~\ref{KL:tree} for a $0$-$1$-$2$-increasing tree (in left) and a binary increasing tree (in right, where we only consider the blue labels).

For $e=e_1e_2\ldots e_n\in\I_n(000)$, let $\mathcal{C}(e)$ be the unique $0$-$1$-$2$-increasing tree such that $i$ is the child of $e_i$, for all $1\leq i\leq n$. See an example of $\mathcal{C}$ when $e=(0,1,2,2,1,0,6,5)\in\I_8(000)$ in left-side of Fig.~\ref{KL:tree}. It is clear that $e\mapsto \mathcal{C}(e)$ is a bijection between $\I_n(000)$ and $0$-$1$-$2$-increasing tree of order $n+1$.   This tree representation of $000$-avoiding inversion sequences was found recently by Corteel et al.~\cite{cor}.

Let $w=w_1w_2\ldots w_n$ be a word on $\N$ with no repeated letters. Define a rooted ordered tree $\mathcal{T}(w)$ recursively as follows. If $w=\emptyset$, then $\mathcal{T}(w)=\emptyset$. Otherwise, suppose $w_i$ is the smallest letter of $w$ and define $\mathcal{T}(w)=(\mathcal{T}(w_1\ldots w_{i-1}), w_i,\mathcal{T}(w_{i+1}\ldots w_n))$, the tree with the left subtree $\mathcal{T}(w_1\ldots w_{i-1})$ and right subtree $\mathcal{T}(w_{i+1}\ldots w_n)$ attached to the root $w_i$.  The mapping $\pi\mapsto \mathcal{T}(\pi)$ is a bijection between $\S_n$ and binary increasing trees of order $n$, which is one classical tree representation of permutations (cf.~\cite[Section~1.5]{st}). For example, the tree representation of the permutation $78153426\in\S_8$ is the binary increasing tree of order $8$ in right-side of Fig.~\ref{KL:tree} (with the right-most vertex removed). 

In a binary increasing tree, the path from the root to the right-most vertex is called the {\em right-most path} of this tree.  The following result about tree representation of Simsum permutations is important.

\begin{proposition}\label{sim:tree}
A permutation $\pi$ is Simsun if and only if in $\mathcal{T}(\pi)$  the smallest child of every vertex not on the right-most path must be a right child. 
\end{proposition}
\begin{proof}
It is clear that a permutation $\pi$ has no double descent if and only if the tree $\mathcal{T}(\pi)$ has no vertex whose only child is a left child, except maybe for the rightmost vertex. The result then follows from this property and the definition of Simsun permutations. 
\end{proof}

A binary increasing tree $T$ is called a {\em Simsum tree} if $\mathcal{T}^{-1}(T)$ is a Simsum permutation. 
Given a $0$-$1$-$2$-increasing tree $T$ of order $n+1$, we can give specified position, left or right, to each child, so that 
\begin{itemize}
\item[(i)] the path from the root $0$ to $n$ moves to the right;
\item[(ii)] for every vertex which is not a leaf and not on the path from the root $0$ to $n$, its smallest child is a right child while another child (if any) becomes a left child.
\end{itemize}
We then delete the vertex $n$ (which must be on the right-most) from this ordered tree and increase each label of other vertices by one. 
In view of Proposition~\ref{sim:tree}, the resulting ordered tree, that we denote $\mathcal{M}(T)$, is a Simsun tree of order $n$. The mapping $T\mapsto \mathcal{M}(T)$ is easily seem to be a bijection between $0$-$1$-$2$-increasing trees of order $n+1$ and Simsun trees of order $n$. 
See Fig.~\ref{KL:tree}  for an example of the mapping $\mathcal{M}$.

\begin{proof}[Proof of Theorem~\ref{euler:asc}] Define $\Omega$ to be the composition $\mathcal{T}^{-1}\circ \mathcal{M}\circ \mathcal{C}:\I_n(000)\rightarrow RS_n$. 
Since $\mathcal{C}$, $\mathcal{M}$ and $\mathcal{T}$ are bijections, $\Omega$ is a bijection. See an example of $\Omega$ in Fig.~\ref{KL:tree}.  It is almost obvious from the construction that $\Omega$ transforms the pair $(\dist,\last+1)$ to $(\asc,\last)$.
\end{proof}

  \begin{remark}

It also follows from the simple tree representation $\mathcal{C}$ of $000$-avoiding inversion sequences and a result of
Poupard~\cite[Proposition~1]{pou} that the statistic ``$\last+1$'' is {\em Entrianger}, namely
$$
|\{\pi\in\Alt_{n+1}:\pi_1=k+1\}|=|\{e\in\I_n(000):\last(e)+1=k\}|,
$$
where $\Alt_{n}$ is the set of all down-up permutations in $\S_n$.
Can the generating function for this Entrianger--Eulerian pair on $\I_n(000)$ be calculated? The interested reader is referred to~\cite{fh} for the Andr\'e permutation calculus for pairs of Entrianger statistics. 
 \end{remark}
\subsection{Double Eulerian distribution on $\I_n(000)$}
In the rest of this section, we will prove a double Eulerian equidistribution (see Theorem~\ref{thm:simdou}) involving the pair $(\asc,\iasc)$ on $\I_n(000)$.
We begin with set-valued extensions of $\iasc$ and $\asc$. For each $\pi\in\S_n$, introduce the set-valued statistics
 $$\IASC(\pi):=\{\pi_i: \text{ $\pi_i$ appears on the left of $\pi_i+1$}\}$$ 
 and 
 $$
 \BOT(\pi):=\{\pi_i: \pi_i<\pi_{i+1}\}.
 $$
 We call $\BOT(\pi)$ the {\em {\bf\em bot}tom values} of the ascents of $\pi$, whose cardinality is $\asc(\pi)$.
 For example, if $\pi=78153426\in\S_8$, then $\IASC(\pi)=\{1,3,5,7\}$ and $\BOT(\pi)=\{1,2,3,7\}$.
 
 Next we construct a new coding $\Upsilon:\S_n\rightarrow\I_n$ which transforms the statistic $\BOT$ to $\ROW$. For each $\pi\in\S_n$, define $\Upsilon(\pi)=(e_1,e_2,\ldots,e_n)$, where $e_i$ equals the letter closest to $i$ in $\pi$, smaller than $i$ and left to $i$ (by convention $\pi_0=0$ is in position $0$). For example, if $\pi=78153426\in\S_8$, then $\Upsilon(\pi)=(0,1,1,3,1,2,0,7)$. 
 It is clear that if $\pi_i<\pi_{i+1}$, then $\pi_i$ in an entry of $\Upsilon(\pi)$. On the other hand, if 
 $\pi_i>\pi_{i+1}$, then $\pi_i$ is never an entry of $\Upsilon(\pi)$. Therefore, we have $\BOT(\pi)=\ROW(\Upsilon(\pi))$.
 To see that $\Upsilon$  is a bijection, we construct its inverse recursively. For each $e=(e_1,\ldots,e_n)\in\I_n$, suppose the image permutation $\pi'=\Upsilon^{-1}(e_1,\ldots,e_{n-1})$ of $(e_1,\ldots,e_{n-1})\in\I_{n-1}$ is known. Then $\Upsilon^{-1}(e)$ is obtained from $\pi'$ by inserting $n$ immediately to the right of the letter equals $e_n$ in $\pi'$. 
 
 It turns out that $\Upsilon(\pi)=V(\pi^{-1})$ for each permutation $\pi$, where  $V:\S_n\rightarrow \I_n$ is the coding named V-code in Foata~\cite{fo}. As was shown in~\cite[Th\'eor\`eme~2]{fo}, there exist another coding $S:\S_n\rightarrow \I_n$ called S-code and satisfying  
 \begin{itemize}
 \item $S(\pi)$ is word rearrangement of $V(\pi)$
 \item $\ASC(S(\pi))=\ASC(\pi)$.
 \end{itemize}
  Thus, we have the following set-valued extension of Theorem~\ref{foata}. 
 \begin{theorem}\label{set:foata}
 The bijection $\digamma:\S_n\rightarrow\I_n$ that maps $\pi$ to $S(\pi^{-1})$  has the property 
 $$
(\ROW,\ASC)\digamma(\pi)=(\BOT,\IASC)(\pi).
$$
Consequently, 
\begin{equation}\label{dou:foata2}
\sum_{\pi\in \S_n} s^{\BOT(\pi)}t^{\IASC(\pi)}=\sum_{e\in\I_n}s^{\ROW(e)}t^{\ASC(e)}
\end{equation} 
or equivalently, 
\begin{equation}\label{dou:foata}
\sum_{\pi\in\S_n} s^{\IDB(\pi)}t^{\DES(\pi)}=\sum_{e\in\I_n}s^{\ROW(e)}t^{\ASC(e)},
\end{equation} 
where $\IDB(\pi):=\{\pi^{-1}_{i+1}: \pi^{-1}_{i+1}<\pi^{-1}_i\}$ is the set of {\em{\bf i}nverse {\bf d}escent {\bf b}ottoms} of $\pi$. 
 \end{theorem}
 \begin{proof}
 By the properties of $S$-code, the bijection $\digamma$ satisfying 
  \begin{itemize}
 \item $\digamma(\pi)=S(\pi^{-1})$ is word rearrangement of $V(\pi^{-1})=\Upsilon(\pi)$
 \item $\ASC(\digamma(\pi))=\ASC(S(\pi^{-1}))=\ASC(\pi^{-1})=\IASC(\pi)$.
 \end{itemize}
 The result then follows.
 \end{proof}

 Even though the bijection $\digamma:\S_n\rightarrow\I_n$ in Theorem~\ref{set:foata} does not restrict to a bijection between $RS_n$ and $\I_n(000)$ (as $\Upsilon$ does not), we still have the following restricted version of~\eqref{dou:foata2}. 
 
\begin{theorem}  \label{thm:simdou}
There exist a bijection $\Lambda: RS_n\rightarrow\I_n(000)$
such that 
$$
(\IASC,\BOT)(\pi)=(\ASC,\ROW)\Lambda(\pi)
$$
for each $\pi\in RS_n$. 
Consequently, 
\begin{equation}\label{dou:simsun}
\sum_{\pi\in RS_n} s^{\asc(\pi)}t^{\iasc(\pi)}=\sum_{e\in\I_n(000)}s^{\dist(e)}t^{\asc(e)}.
\end{equation}
\end{theorem}

Our bijection $\Lambda$ will be a combination of S-code, $\Upsilon$-code and an intriguing bijection $\mathcal{K}$ from $0$-$1$-$2$-increasing trees of order $n+1$ to Simsun trees of order $n$, which is inspired by the {\em jeu de taquin} of Sch\"utzenberger.

\begin{figure}

\centering
\begin{tikzpicture}

\draw (-5,2)node[]{
\begin{tikzpicture}[every node/.style={circle,draw},level distance=1.2cm, sibling distance=2.2cm] 
\tikzstyle{level 3}=[level distance=1.2cm, sibling distance=1.2cm]
 \node {0} 
        child{node{2}
            child{node{5}
                child{node{7}}}
            child{node{4}}}
         child{node{1}
           child{node{3}
             child{node{8}}
             child{node{6}}}}; 
\end{tikzpicture}};
\draw (-1.8,2)node{$\rightarrow$};
\draw (-1.8,2.5)node{$\mathcal{K}$};
\draw(-5,-1) node{$\uparrow$};
\draw(-5.4,-1) node{$\mathcal{C}$};
\draw(-5,-2) node{$(0,0,1,2,2,3,5,3)\in\I_8(000)$};
\draw(2,-1) node{$\downarrow$};
\draw(2.5,-1) node{$\mathcal{T}^{-1}$};
\draw(2,-2) node{$57241386\in RS_8$};
\draw (2,1.6)node[]{
\begin{picture}(150,30)\setlength{\unitlength}{1mm}
\thinlines
\put(18,26){\circle*{2}}\put(20,26){$1$}
\put(18,26){\line(-1,-1){10}}
\put(8,16){\circle*{2}}\put(10,15){$2$}
\put(8,16){\line(-1,-2){6}}
\put(2,4){\circle*{2}}\put(-4,0){$5$}
\red{\dashline{1}(2,4)(-1,-5)\put(-1,-8){$5$}}

\put(2,4){\line(2,-3){6}}
\put(8,-5){\circle*{2}}\put(10,-6){$7$}
\red{\dashline{1}(8,-5)(15,-12)\put(16,-14){$7$}}

\put(8,16){\line(2,-3){6}}
\put(14.5,6.5){\circle*{2}}\put(12.5,2){$4$}
\red{\dashline{1}(14.5,6.5)(21.5,0.5)\put(21,-3.5){$4$}}

\put(18,26){\line(2,-1){20}}
\put(38,16){\circle*{2}}\put(39,17){$3$}
\red{\dashline{1}(38,16)(31,9)\put(28.5,6.5){$1$}}

\put(38,16){\line(1,-1){10}}
\put(48,6){\circle*{2}}\put(49,7){$6$}
\red{\dashline{1}(48,6)(55,-1)\put(55,-4.5){$6$}}

\put(48,6){\line(-1,-1){8}}
\put(40,-2){\circle*{2}}\put(42,-3.5){$8$}
\red{\dashline{1}(40,-2)(47,-9)\put(47.5,-11){$8$}}

\end{picture}
};
\end{tikzpicture}

\caption{An example of the bijection $\mathcal{K}$.}
\label{KL:jeu}
\end{figure}

Given a $0$-$1$-$2$-increasing tree $T$ of order $n+1$, remove the label $0$ of the root  and then successively move up the child with smallest label of the vertex without label. This procedure ends until the label of a leaf, say $l$, has been move up. We remove this leaf without label which results in an unordered  increasing tree on $\{1,2,\ldots,n\}$. There is a unique way to order the children of this unordered increasing tree to turn it to be a Simsun tree such that the right-most path of which is exactly the path from the root $1$ to $l$. Denote the resulting Simsun tree by $\mathcal{K}(T)$. For instance, if $T$ is the $0$-$1$-$2$-increasing tree $T$ in left-side of Fig.~\ref{KL:jeu}, then  the label $0$ is removed and the moving procedure is: (i) $1$ is moved up since $1<2$; (ii) $3$ is moved up; (iii) $6$ is moved up since $6<8$. Here $l=6$. Finally, $\mathcal{K}(T)$ becomes the Simsun tree in the right-side of Fig.~\ref{KL:jeu} (ignore the dashed line and red labels). This procedure on trees  is similar to the classical jeu de taquin on skew standard Young tableau. Since the inverse of $\mathcal{K}$ can be constructed easily, the mapping $\mathcal{K}$ is in fact a bijection between $0$-$1$-$2$-increasing tree $T$ of order $n+1$ and Simsun trees of order $n$.

\begin{lemma}\label{lem:key}
The mapping $K=\mathcal{T}^{-1}\circ \mathcal{K}\circ \mathcal{C}:\I_n(000)\rightarrow RS_n$ is a bijection satisfying 
\begin{equation}\label{bt}
\ROW(e)=\BOT(K(e))
\end{equation}
for each $e\in\I_n(000)$. Moreover, 
\begin{equation}\label{iasc}
\IASC(\Upsilon^{-1}(e))=\IASC(K(e)). 
\end{equation}
\end{lemma}

\begin{proof}
Since $\ROW(e)$ equals the set of the labels of all non-leaf and non-root vertices of $\mathcal{T}^{-1}(e)$ and $\BOT(K(e))$ equals the set of the labels of these vertices with a right child in $\mathcal{K}\circ \mathcal{C}(e)$, property~\eqref{bt} follows. Property~\eqref{iasc} is less obvious but can be proved by induction on $n$. 

Recall that $\Upsilon^{-1}(e)$ can be constructed recursively:  suppose the image permutation $\pi'=\Upsilon^{-1}(e')$ of $e'=(e_1,\ldots,e_{n-1})\in\I_{n-1}$ is known, then $\Upsilon^{-1}(e)$ can be obtained from $\pi'$ by inserting $n$ immediately to the right of the letter equals $e_n$ in $\pi'$. For example, if $e=(0,0,1,2,2,3,5,3)$ in Fig.~\ref{KL:jeu}, then $\Upsilon^{-1}(e)=2\red{5741}3\red{86}$. We call an index $0\leq i\leq n$ an {\em available inserting position} of $\Upsilon^{-1}(e)$ if $i$ appears in $e$ less than $2$ times. Let $\AVA(\Upsilon^{-1}(e))$ be the set of all available inserting positions of $\Upsilon^{-1}(e)$. We will focus on the order of the letters in $\AVA(\Upsilon^{-1}(e))$ that appear in $\Upsilon^{-1}(e)$. For our running example, the letters in $\AVA(\Upsilon^{-1}(e))=\{1,4,5,6,7,8\}$ appear in $\Upsilon^{-1}(e)$ in the order \red{$5,7,4,1,8,6$}. 

On the other hand, Simsun trees  also can be constructed recursively. Let $i$ be a vertex with less than $2$ children of a Simsun tree $T$ of order $n$. Suppose the parent of $i$ is $j$ (by convention, the parent of $1$ is $0$). We disguising four cases where we can attach $n+1$ to the vertex $i$ so that $T$ becomes a Simsun tree of order $n+1$:
\begin{itemize}
\item[(a)] If $i$ is not on the right-most path of $T$ and $i$ is a leaf, then we can attach $(n+1)$ as a right child of vertex $i$. We mark this position by $i$.
\item[(b)] If $i$ is not on the right-most path of $T$ and $i$ has a right child, then we can attach $(n+1)$ as a left child of  vertex $i$. We mark this position by $i$.
\item[(c)] If $i$ is on the right-most path of $T$ and has a right child, then we can attach $(n+1)$ as a left child of  vertex $i$. We mark this position by $j$.
\item[(d)] Otherwise, $i$ is the right-most vertex in $T$. We further disguising two cases: 
\begin{itemize}
\item[(d1)] If $i$ has a left child, then we can attach $(n+1)$ as a right child of vertex $i$ and  mark this position by $i$. 
\item[(d2)] Otherwise, $i$ is the right-most leaf of $T$. In this case, we can either attach $(n+1)$ as a left child or a right child to  vertex $i$. We mark the position in right by $i$, while the position in left by $j$. 
\end{itemize}
\end{itemize}
See Fig.~\ref{KL:jeu} (right-side) for a Simsun tree with its potential positions marked (by red integers): $i=4$, $7$ or $8$ is in case (a); $i=5$ is in case (b); $i=3$ is in case (c); $i=6$ is in case (d1).  One can check case by case that attaching  $n+1$ to a position marked $k$ in the Simsun tree $\mathcal{K}\circ \mathcal{C}(e)$ makes it become the Simsun tree $\mathcal{K}\circ \mathcal{C}(e')$, where $e'=(e_1,\ldots,e_n,k)\in\I_{n+1}(000)$. It is clear that the set of marked positions of $\mathcal{K}\circ \mathcal{C}(e)$ equals $\AVA(\Upsilon^{-1}(e))$. 

Now, property~\eqref{iasc}  is an easy consequence of the following key observation.
\vskip 0.05in

{\bf Observation:} The topological order of the marked positions of $\mathcal{K}\circ \mathcal{C}(e)$ is the same as the order (from left to right)  of the letters in $\AVA(\Upsilon^{-1}(e))$ appearing in $\Upsilon^{-1}(e)$.
\vskip 0.05in
This observation can be proved easily from the recursive constructions of $\mathcal{K}\circ \mathcal{C}$ and $\Upsilon^{-1}$ by induction on $n$, which ends the proof. 
\end{proof}

\begin{proof}[Proof of Theorem~\ref{thm:simdou}]
Let $\widetilde{RS_n}:=\{\Upsilon^{-1}(e): e\in\I_n(000)\}$.  It follows  from Lemma~\ref{lem:key} that $K\circ\Upsilon:\widetilde{RS_n}\rightarrow RS_n$ is a bijection such that
$$
 (\IASC,\BOT)(\pi)=(\IASC,\BOT)K\circ\Upsilon(\pi)
$$
for each $\pi\in\widetilde{RS_n}$. Now simply set $\Lambda=\digamma\circ\Upsilon^{-1}\circ K^{-1}$, which completes the proof in view of Theorem~\ref{set:foata}.
\end{proof}

Let 
$E_n(t):=\sum_{\pi\in RS_n}t^{\iasc(\pi)}$ be the inverse ascent polynomial on Simsun permutations. We could not find any appearance of this $t$-extension of Euler numbers in the literature. The first values of $E_n(t)$ are:
\begin{align*}
E_2(t)&=1+t,\\
E_3(t)&=4t+t^2,\\
E_4(t)&=4t+11t^2+t^3,\\
E_5(t)&=2t+32t^2+26t^3+t^4,\\
E_6(t)&=t+52t^2+161t^3+57t^4+t^5.
\end{align*}
Chow and Shiu~\cite{chow} showed that the ascent polynomials on Simsun permutations are real-rooted. It seems that this property also holds for the inverse ascent polynomials.
\begin{conjecture}\label{real:euler}
The polynomial $E_n(t)$ is real-rooted for each $n\geq2$. In particular, $E_n(t)$ is log-concave and  unimodal. 
\end{conjecture}

\section{Final remarks} 
Because of  Theorems~\ref{euler:asc} and~\ref{catalan:asc} and Conjecture~\ref{schroder:asc}, one may wonder if the same equidistribution holds for the whole sets $\S_n$ and $\I_n$ without restriction. This is in fact true as we will show in the following.
\begin{theorem}\label{thm:last}
For $n\geq1$, we have the equidistribution:
\begin{equation}\label{per:inv}
\sum_{\pi\in\S_n} t^{\asc(\pi)}u^{\last(\pi)}=\sum_{e\in\I_n}t^{\dist(e)}u^{\last(e)+1}.
\end{equation}
\end{theorem}
\begin{proof}
Since the natural coding $\Theta$ transforms the pair $(\des,n-\last)$ on $\S_n$ to $(\asc,\last)$ on $\I_n$, we have 
$$
\sum_{e\in\I_n}t^{\asc(e)}u^{\last(e)}=\sum_{\pi\in\S_n} t^{\des(\pi)}u^{n-\last(\pi)}=\sum_{\pi\in\S_n} t^{\asc(\pi)}u^{\last(\pi)-1},
$$
where the second equality follows from the simple involution on $\S_n$
$$\pi_1\pi_2\cdots\pi_n\mapsto(n+1-\pi_1)(n+1-\pi_2)\cdots(n+1-\pi_n).$$
Therefore, equidistribution~\eqref{per:inv} is equivalent to 
\begin{equation}\label{inv:last}
\sum_{e\in\I_n}t^{\asc(e)}u^{\last(e)}=\sum_{e\in\I_n}t^{\dist(e)}u^{\last(e)}.
\end{equation}

We proceed to show~\eqref{inv:last} by induction on $n$. Obviously, the result is true for $n=1$. Suppose that the result is true for $n=k$. We need to show that for a fixed $j$, $0\leq j\leq k$, 
\begin{equation*}\label{last=j}
\sum_{e\in\I_{k+1}\atop{\last(e)=j}}t^{\asc(e)}=\sum_{ e\in\I_{k+1}\atop{\last(e)=j}}t^{\dist(e)}.
\end{equation*}
Since $\sum_{e\in\I_k}t^{\asc(e)}=\sum_{e\in\I_k}t^{\dist(e)}$ by induction hypothesis, it will be sufficient to show 
\begin{equation}\label{last<j}
\sum_{e\in\I_{k}\atop\last(e)<j}t^{\asc(e)}=\sum_{e\in\I_{k}\atop{j\notin\ROW(e)}}t^{\dist(e)}.
\end{equation}

\begin{figure}
\begin{center}
\begin{tikzpicture}[scale=.45]
\draw [thick]
(0,0)--(8,0) (0,1)--(8,1) (1,2)--(8,2)(2,3)--(8,3)(3,4)--(8,4)(4,5)--(8,5)(5,6)--(8,6)(6,7)--(8,7)(7,8)--(8,8)
(0,0)--(0,1) (1,0)--(1,2)(2,0)--(2,3)(3,0)--(3,4)(4,0)--(4,5)(5,0)--(5,6)(6,0)--(6,7)(7,0)--(7,8)(8,0)--(8,8);
\draw(8.5,4.5) node{$j$};
\draw(8.5,3.2) node{$\vdots$};
\draw(8.5,1.5) node{$1$};
\draw(8.5,0.5) node{$0$};

\draw(3.5,-0.5) node{$j$};
\draw(7.5,-0.5) node{$0$};
\draw(6.5,-0.5) node{$1$};
\draw(5,-0.5) node{$\cdots$};

\draw [color=blue](4,5)--(5,4)--(6,5)--(7,4)--(8,5)(4,4)--(5,5)--(6,4)--(7,5)--(8,4);
\draw[fill=blue!25] (14,0) rectangle (15,4);
\draw [thick]
(11,0)--(19,0) (11,1)--(19,1) (12,2)--(19,2) (13,3)--(19,3) (14,4)--(19,4)(16,5)--(19,5)(17,6)--(19,6)(18,7)--(19,7)
(11,0)--(11,1) (12,0)--(12,2)(13,0)--(13,3)(14,0)--(14,4)(15,0)--(15,4)(16,0)--(16,5)(17,0)--(17,6)(18,0)--(18,7)(19,0)--(19,7);

\draw[fill=blue!25] (29,0) rectangle (30,4);
\draw [thick]
(22,0)--(30,0) (22,1)--(30,1) (23,2)--(30,2)(24,3)--(30,3)(25,4)--(30,4)(26,5)--(29,5)(27,6)--(29,6)(28,7)--(29,7)
(22,0)--(22,1)(23,0)--(23,2)(24,0)--(24,3)(25,0)--(25,4)(26,0)--(26,5)(27,0)--(27,6)(28,0)--(28,7)(29,0)--(29,7)(30,0)--(30,4);
\end{tikzpicture}
\end{center}
\caption{Three different boards.\label{board}}
\end{figure}

Now consider the three different boards in~Fig.~\ref{board}. The second board is obtained from the first board by deleting its $j$-th row, while the third board is obtained from the second one by moving the $j$-th column  to the right. By a {\em configuration} inside a board $B$, we mean a filling of  the boxes of $B$ with balls such that in each column one and only one box receives a ball. In a configuration of $B$, a row of $B$ is said to be {\em occupied} if at least one box in this row receives a ball. Note that counting the distinct entries of inversion sequences in $\{e\in\I_k:j\notin\ROW(e)\}$ is equivalent to counting the occupied rows  in configurations inside the first boards, or alternatively inside the second or third board. It then follows that 
$$
\sum_{e\in\I_{k}\atop{j\notin\ROW(e)}}t^{\dist(e)}=\sum_{e\in\I_{k}\atop\last(e)<j}t^{\dist(e)},
$$
which is equivalent to~\eqref{last<j} by the induction hypothesis. This completes the proof of the theorem by induction. 
\end{proof}

Besides Conjecture~\ref{real:euler}, the palindromic polynomial $S_n(t)$ was also conjectured in~\cite{flz} to be real-rooted. It would be interesting to investigate systematically the real-rootedness of all the Eulerian polynomials, i.e. the distribution polynomials of ascents or   distinct positive entries, on restricted inversion sequences appearing in this paper.

\section*{Acknowledgement}  This work was supported
by the National Science Foundation of China grants 11871247 and 11501244,  by the Austrian
Science Foundation FWF, START grant Y463 and SFB grant F50, by the project of Qilu Young Scholars of Shandong University, and by the National Research Foundation of Korea (NRF) grant funded by the Korea government (MSIT) (No. 2019R1F1A1062462).

\end{document}